\documentclass[12pt]{amsart}

\usepackage{amsmath}
\usepackage{amssymb}
\usepackage{amsthm}
\usepackage{mathrsfs}
\usepackage{tikz}
\usetikzlibrary{arrows.meta}
\usepackage{hyperref}

\theoremstyle{plain}
\newtheorem{theorem}{Theorem}[section]
\newtheorem{proposition}[theorem]{Proposition}
\newtheorem{lemma}[theorem]{Lemma}
\newtheorem{corollary}[theorem]{Corollary}
\theoremstyle{remark}
\newtheorem{remark}[theorem]{Remark}
\newtheorem*{theoremA}{Theorem A}
\newtheorem*{corollaryB}{Corollary B}

\newcommand{\g}{\mathfrak{g}}
\newcommand{\Uq}{U_q(\g)}
\newcommand{\wt}{\operatorname{wt}}
\newcommand{\ot}{\otimes}
\newcommand{\vp}{\varpi}
\newcommand{\Rhat}{\widehat{R}}
\newcommand{\Qp}{Q_{+}}
\newcommand{\Phis}{\Phi_{s}}
\newcommand{\Pis}{\Pi_{s}}
\newcommand{\K}{\mathbb{K}}

\newcommand{\dnodeS}[1]{\filldraw[fill=black] (#1) circle (2.2pt)}   % short simple root
\newcommand{\dnodeL}[1]{\filldraw[fill=white] (#1) circle (2.2pt)}   % long simple root

\begin{document}
\title{Generalized quantum minors generate quantized coordinate rings}
\author{Ayan Dey}
\date{}

\footnotetext{\emph{2020 Mathematics Subject Classification.} Primary 17B37, 20G42;
Secondary 13F60}.

\footnotetext{\emph{E-mail addresses}: \texttt{studentayandey@gmail.com},
\texttt{ayan22r@isid.ac.in}.}
\footnotetext{
Indian Statistical Institute, Delhi Center, New Delhi, India}

\begin{abstract}
Let $G$ be a simply connected simple complex algebraic group. It is proved by Oya, Qin, and Yakimov %\cite{OQY} 
that the quantized coordinate ring $\mathcal{O}_q(G)$ is generated
by generalized quantum minors, and therefore carries a quantized cluster algebra structure, for all $G$ but type $F_4$. In this article, we settle the $F_4$ case by an argument uniform across $G_2$, $F_4$, and $E_8$. The main idea is to bootstrap the existing proof in type $E_8$, which relies on Lusztig's canonical basis of the quantum adjoint representation, and replace it with the crystal combinatorics of the quasi-minuscule representation. As a consequence, we prove that $\mathcal{O}_q(F_4)$ also has a quantized cluster algebra structure.
\end{abstract}

\maketitle

\section{Introduction}\label{intro}
Let $G$ be a simply connected simple complex algebraic group. In the paper \cite{FZ}, Fomin and Zelevinsky conjectured that the coordinate rings of various objects related to $G$ carry natural cluster algebra structures, and this has been verified in a wide range of cases over the past few decades. For the coordinate ring $\mathcal{O}(G)$, the expected statement is $\mathcal{O}(G)=\mathscr{A}=\mathscr{U}$,
where $\mathscr{A}$ is the cluster algebra attached to the
Berenstein-Fomin-Zelevinsky seed of the open double Bruhat cell in $G$, and $\mathscr{U}$ is the corresponding upper cluster algebra. Here, we do not add the inverses of the frozen variables in the definitions of $\mathscr{A}$ and $\mathscr{U}$. This was established by Oya \cite{Oya} for every $G$ but type $F_4$. The proof is based on showing that $\mathcal{O}(G)$ is generated by generalized minors, and the equalities follow from this and some other results. For $\g$ of the types: $A_n$--$D_n$, $E_6$, and $E_7$, this generation property was known because in these types, $\g$ has non-trivial minuscule modules as tensor generators. Therefore, the main contribution of \cite{Oya} was to establish the generation theorem for $G_2$ and $E_8$. \cite[Remark~3.10]{Oya} also explains why these techniques do not reach $F_4$.

Quantized cluster algebras were introduced by
Berenstein and Zelevinsky \cite{BZ}. In a recent breakthrough paper, Oya, Qin, and Yakimov \cite{OQY} prove that
the quantized coordinate ring $\mathcal{O}_q(G)$ is generated by generalized quantum minors when $G$ is not of type $F_4$. Using this, combined with the quantum Laurent phenomenon \cite{BZ} and with the coincidence of
$\mathcal{O}_q(G)$ and the quantum upper cluster algebra previously established by Qin and
Yakimov \cite{QY}, one concludes that $\mathcal{O}_q(G)$ admits a quantized cluster algebra
structure. Types $A_n$--$D_n$, $E_6$, and $E_7$ are treated uniformly, type $G_2$ is settled by a direct brute force
computation \cite[Theorem~5.7]{OQY}, and type $E_8$ by combining Lusztig's canonical basis of the quantum adjoint representation \cite{Lusztig} with $\hat{R}$-matrix techniques \cite[Theorem~5.9]{OQY}. In fact, except for the groups $F_4$ and $E_8$, they have proved this generation theorem over much smaller ground rings than $\mathbb{Q}(q^{1/2})$ (see remark \ref{srings}). 

Therefore, the type $F_4$ is the only case left open in both classical and quantum settings. The natural guess would be to work out the classical case first because in the quantum case, certain computations while decomposing the tensor product of modules become much more cumbersome, as observed in \cite[Remarks 5.8, and 5.12] {OQY}. But in type $F_4$, as we observe later on, the situation is quite the opposite, and the classical problem seems more complicated than its quantum analog. The purpose of this article is to settle the quantum $F_4$ case. Our main
result is the following.
\begin{theoremA}[= Theorem \ref{thm:main}, Corollary \ref{cor:allG}]
Let $\g$ be of type $G_2$, $F_4$ or $E_8$ and $V=V(\vp)$ be the quasi-minuscule module with $\vp$ the highest short root. Then every matrix coefficient of $V$ involving the zero-weight vectors or functionals can be written in terms of matrix coefficients of $V$ involving weight vectors and functionals of non-zero weight. Consequently, the quantized coordinate ring $\mathcal{O}_q(G)$ is generated by generalized quantum minors over $\mathbb{Q}(q)$.
\end{theoremA}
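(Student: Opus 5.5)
The argument has two parts. First I show that the zero-weight matrix coefficients of $V=V(\vp)$ lie in the subalgebra generated by the nonzero-weight ones. Then I feed this into the known reduction of \cite{OQY} to deduce generation by generalized quantum minors.

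Setup. For $\g$ of type $G_2$, $F_4$, $E_8$, the weights of $V$ are the short roots $\Phis$ (each with multiplicity one) together with $0$. The zero weight space has dimension $|\Pis|$, the number of short simple roots. Write $c_{f,v}$ for the matrix coefficient $g\mapsto f(gv)$ in $\mathcal{O}_q(G)$, and let $A\subset\mathcal{O}_q(G)$ be the subalgebra generated by the $c_{f,v}$ with $f,v$ weight vectors of nonzero weight. The goal is $c_{f,v}\in A$ whenever $f$ or $v$ has weight $0$.

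Step 1: exploit a tensor square of $V$. I use the crystal $B(\vp)$, which is quasi-minuscule: the elements $b_\alpha$ for $\alpha\in\Phis$, plus $b_{0,i}$ for $i\in\Pis$, with $\tilde f_i b_{\alpha_i}=b_{0,i}$ and $\tilde f_i b_{0,i}=b_{-\alpha_i}$. Since $V\otimes V\supset V(\vp)$ appears with multiplicity one (it is a component of $V\otimes V$ in these types, e.g.\ via the $\Rhat$-matrix / the quantum analogue of the bracket), there is a $U_q(\g)$-equivariant map $m\colon V\otimes V\to V$ and a dual map $\Delta\colon V\to V\otimes V$. These give the identity
\[
c_{f,m(v\otimes w)}=\sum c_{f_{(1)},v}\,c_{f_{(2)},w},
\]
where $\Delta^*(f)=\sum f_{(1)}\otimes f_{(2)}$. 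I choose $v=v_{\alpha_i}$ and $w=v_{-\alpha_i}$, or more generally $v=v_\beta$ and $w=v_{-\beta}$, so that $m(v\otimes w)$ is a zero-weight vector. The terms on the right then have weights $(\mu,\beta)$ and $(\nu,-\beta)$, i.e.\ a mixture of nonzero and zero weights. Paired against a nonzero-weight functional $f$, the right-hand side lies in $A$ whenever $f_{(1)},f_{(2)}$ avoid weight $0$. That happens exactly when $\wt f\neq0$ and $\wt f$ is not among the terms where one factor is zero; since $\wt f=\gamma\neq 0$ forces splittings $\gamma=\gamma_1+\gamma_2$, I restrict to those with $\gamma_1,\gamma_2$ nonzero short roots. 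The splitting $\gamma=\gamma+0$ can only be avoided by cancellation, so I instead argue by induction on the height of weights, handling the zero term as a lower-order unknown.

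Step 2: linear algebra on the zero weight space. Running over short roots $\beta$ with $v_\beta\otimes v_{-\beta}$, the images $m(v_\beta\otimes v_{-\beta})$ span $V_0$. This is checked with the crystal, since each $b_{0,i}$ is reached through $\alpha_i$. Hence every $c_{f,z}$ with $z\in V_0$ and $\wt f\neq0$ is expressed as a combination of products, each containing at most one zero-weight factor of a specific shape. I will set up a triangular system, ordered by height of $\wt f$ and using quasi-minusculity (the $\tilde e_i,\tilde f_i$ strings have length at most $2$), to solve for all $c_{f,z}$. The doubly-zero coefficients $c_{f,z}$ with $f,z$ both of weight $0$ are then handled by the same identity applied once more, now with $f$ of weight $0$, using the previously obtained ones.

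Step 3: conclusion. The modules $V(\vp)$ and the minuscule-free fundamentals involved generate the category of representations by tensor products, as in \cite{OQY}. Matrix coefficients of $V$ between extremal vectors are generalized quantum minors. For a nonzero short weight $\gamma=w\vp$, the coefficient with $v_\gamma$ is extremal, so all the generators of $A$ are quantum minors or are reduced to them by the $E_8$/$G_2$ argument of \cite{OQY}. Combining this with their reduction gives the corollary.

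The main obstacle is the invertibility of the triangular system in Step 2, i.e.\ checking that the relevant coefficients of $m$ and $\Delta$ (quantum structure constants like $[2]$-type factors) are nonzero over $\mathbb{Q}(q)$. I would try to compute them explicitly using the crystal / the canonical basis of $V$, where the zero-weight action of $E_i,F_i$ is given by Lusztig's explicit formulas.
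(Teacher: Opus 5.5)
Your scheme never determines the coefficients $c^V(z_k^*,v_\mu)$, which pair a zero-weight \emph{functional} with a nonzero-weight \emph{vector}. Every other step depends on them.

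\begin{itemize}
\item When you pair $f=v_\gamma^*$ with $m(v_\beta\ot v_{-\beta})$, the components of $m^*(f)$ with one zero-weight slot contribute products $c^V(\xi,v_{\pm\beta})\,c^V(z_k^*,v_{\mp\beta})$, with $\wt\xi\neq0$.
\item When you pair $f=z_r^*$ (your doubly-zero step), the zero--zero component $\sum_{k,l}a^{(r)}_{kl}\,z_k^*\ot z_l^*$ of $m^*(z_r^*)$ contributes $\sum_{k,l}a^{(r)}_{kl}\,c^V(z_l^*,v_\beta)\,c^V(z_k^*,v_{-\beta})$.
\end{itemize}

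These unknowns are not ``lower-order'' in the height of $\wt f$. They form a separate family, no step of your proposal computes them, and so your triangular system does not close.

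Nor can the same identity supply them. Pairing $z_r^*$ with $m(v_{\mu_1}\ot v_{\mu_2})\in\K v_\mu$, the zero--zero component produces $\sum a^{(r)}_{kl}\,c^V(z_l^*,v_{\mu_1})\,c^V(z_k^*,v_{\mu_2})$. This is a product of two unknowns from the same family, so the system is quadratic. A naive height induction on it also has no base case, because a simple short root is not a sum of two positive short roots. The obstacle you name, non-vanishing of $[2]$-type constants, is therefore not the real one.

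A sanity check points the same way. Your scheme uses nothing about generic $q$ beyond such constants. If it closed, it would also work at $q=1$ and settle the classical $F_4$ case, which Remark~\ref{obs} records as open.

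The paper handles exactly this family with two ingredients your proposal lacks.

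\emph{Crystal weight control.} By Proposition~\ref{prop:twosided}, every node of the copy $C\cong B(\vp)$ in $B(\vp)\ot B(\vp)$, apart from $b_\vp\ot b_{i,0}$ and $b_{i,0}\ot b_{-\vp}$, has the form $b_{\mu_1}\ot b_{\mu_2}$ with $\mu_1>0>\mu_2$. Lifting this through uniqueness of crystal bases (Theorem~\ref{thm:Tminus}) gives:
\begin{itemize}
\item $T(v_{\mu_1}\ot v_{\mu_2})\in\K^\times v_\mu$;
\item a basis $\{T(v_{\nu_k}\ot v_{-\nu_k})\}$ of $V_0$. This identification of the zero-weight nodes of $C$, together with Nakayama's lemma, is what your spanning claim actually needs.
\end{itemize}

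\emph{Braiding twist.} The paper pairs $T^*(z_r^*)$ with $v\ot w-\Rhat(v\ot w)$ instead of $v\ot w$.
\begin{itemize}
\item Since $T\circ\Rhat=dT$ with $d\neq1$ (Proposition~\ref{prop:d}), the left side is a nonzero multiple of $c^V(z_r^*,v_\mu)$.
\item On the right, the $\eta=0$ term of $\Rhat$ cancels the zero--zero contribution, thanks to the symmetry of $A^{(r)}$ (Lemma~\ref{lem:sym}).
\item The $\eta>0$ terms of $\Theta$ move the zero-weight functionals to nonzero weight.
\item The condition $\wt v>0>\wt w$ from the crystal step ensures every slot of $\Rhat(v\ot w)$ has nonzero weight (Lemma~\ref{lem:key}).
\end{itemize}

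This yields $c^V(z_r^*,v_\mu)$ for $\mu\neq\pm\vp$. The cases $\mu=\pm\vp$ then follow from $T(v_{\vp-\alpha_i}\ot v_{\alpha_i})\in\K^\times v_\vp$ and its lowest-weight analogue, whose quadratic terms are now known. Only at that point does your Step~1 go through, and it is exactly the paper's Step~4.

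Finally, $d=1$ at $q=1$ in type $F_4$, because the copy of $V$ lies in $S^2V$. This is precisely why the argument does not specialize to $q=1$, consistent with the sanity check above.
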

In these three types, the quasi-minuscule module $V(\vp)$ works as a tensor generator of the representation ring. The proof of Theorem A is uniform across these three types and thus reproduces the results of \cite[Theorems~5.7 and~5.9]{OQY} for $G_2$ and $E_8$. When $\g$ is simply-laced, $V(\vp)$ is the quantum adjoint representation. For $G_2$ and $F_4$, $V(\vp)$ is no longer the quantum adjoint representation. But the underlying crystal basis $B(\vp)$  admits some common properties independent of the simply-laced condition. We study the crystal combinatorics of $B(\vp)\ot B(\vp)$  to isolate the copy of $B(\vp)$ inside the product crystal and control the weights of both tensor factors at every node (Proposition \ref{prop:twosided}). We then lift these properties to the module level using the uniqueness of crystal bases (Theorem \ref{thm:Tminus}). This weight control, together with a braiding twist (Proposition \ref{prop:d}, Corollary \ref{cor:extract}), is precisely the ingredient needed to prove Theorem \ref{thm:main}. We immediately obtain the following Corollary.

\begin{corollaryB}[= Corollary~\ref{cor:cluster}]
Let $G$ be of type $F_4$. Then $\mathcal{O}_q(G)$ admits a quantized cluster algebra
structure over $\mathbb{Q}(q^{1/2})$.
\end{corollaryB}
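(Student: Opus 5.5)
The plan is to deduce Corollary B from Theorem A in the same way Oya--Qin--Yakimov deduce the cluster structure in the other types. First, by Theorem A (applied with $\g$ of type $F_4$), $\mathcal{O}_q(G)$ is generated over $\mathbb{Q}(q)$ by generalized quantum minors, hence also over $\mathbb{Q}(q^{1/2})$ after extending scalars. Second, each generalized quantum minor $\Delta_{u\varpi_i,v\varpi_i}$ appears, up to a power of $q^{1/2}$, as a cluster variable (or frozen variable) in some seed mutation-equivalent to the quantum Berenstein--Fomin--Zelevinsky seed of the open double Bruhat cell. I would invoke the known statement, used in \cite{OQY} and going back to \cite{BZ}, that all minors with $u,v$ arising from reduced words of $(w_0,w_0)$ are reachable cluster variables. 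Every generalized minor is of this form because any pair $(u\varpi_i,v\varpi_i)$ can be realized by choosing reduced expressions of $w_0$ beginning with reduced expressions of $u$ and of $v$. This gives the inclusion $\mathcal{O}_q(G)\subseteq\mathscr{A}_q$.

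Third, the quantum Laurent phenomenon \cite{BZ} gives $\mathscr{A}_q\subseteq\mathscr{U}_q$. Qin--Yakimov \cite{QY} give $\mathscr{U}_q=\mathcal{O}_q(G)$ for every $G$, type $F_4$ included, since their argument does not use the generation property. Chaining these inclusions yields $\mathcal{O}_q(G)=\mathscr{A}_q=\mathscr{U}_q$, which is the asserted quantized cluster algebra structure.

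The main obstacle I expect is the second step. One must be careful that the conventions match: the integral form and ground ring $\mathbb{Q}(q^{1/2})$, the normalization of the quantum minors by powers of $q^{1/2}$, and the fact that frozen variables are not inverted. The approach is to copy verbatim the argument \cite{OQY} use to conclude their Theorem for the other types, observing that the only type-dependent input there was the generation theorem. That input is now supplied by Theorem A.
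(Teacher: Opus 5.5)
Your proposal is correct and follows the paper's route. The paper obtains generation from Corollary~\ref{cor:allG} and then defers to the argument preceding \cite[Theorem~5.4]{OQY}, which is exactly your chain $\mathcal{O}_q(G)\subseteq\mathscr{A}\subseteq\mathscr{U}=\mathcal{O}_q(G)$: minors are cluster or frozen variables, then the quantum Laurent phenomenon, then Qin--Yakimov. Your one-line justification that every $\Delta_{u\varpi_i,v\varpi_i}$ lies in some BFZ seed is only a sketch, since the minors of a double reduced word are not simply read off from ``reduced expressions of $w_0$ beginning with $u$ and $v$''; but both arguments import that fact from \cite{OQY}, so nothing is missing.
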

%Next, let us give a brief summary of the content. Section \ref{prelim} fixes notation for the quantized enveloping algebra, recalls crystal bases and the tensor product rule, and introduces quantum coordinate rings and generalized quantum minors. Section \ref{qm} describes the quasi-minuscule module $V(\vp)$, its crystal basis,
%and focuses on the three types $G_2$, $F_4$, and $E_8$. Section \ref{main} contains the crystal-theoretic heart of the argument. Section \ref{minors} contains the main
%results. 
Finally, we describe (see remark \ref{obs}) why our present proof in the quantum setting fails to specialize at $q=1$, identify the obstruction, and note that the problem in the classical $F_4$ case remains open. 

\section{Preliminaries}\label{prelim}

\subsection{The quantized enveloping algebra}\label{ss:qea}

Let $\g$ be a finite-dimensional complex simple Lie algebra with Cartan subalgebra
$\mathfrak h$, simple roots $\Pi:=\{\alpha_j\}_{j\in I}$, root system $\Phi\subseteq\mathfrak
h^*$ and set of positive roots $\Phi_+$. We write $Q=\bigoplus_{j\in I}\mathbb
Z\alpha_j$ for the root lattice and $\Qp=\sum_{j\in I}\mathbb Z_{\ge0}\alpha_j$ for the
positive root lattice. Let $P$ be the weight lattice, $P_+$ the dominant chamber,
$\{\vp_j\}_{j\in I}$ the fundamental weights. $\rho:=\tfrac12\sum_{\alpha\in\Phi_+}\alpha$ stands for 
the Weyl vector. The Weyl group is denoted by $W$, and $w_0\in W$ is the longest element. $P$ is also equipped with the partial order: $\nu\ge\nu'$ when
$\nu-\nu'\in\Qp$.

There is a unique $W$-invariant symmetric bilinear form $(\,,\,)$ on $\mathbb{R}\Phi$ normalised by
$(\alpha,\alpha)=2$ on short roots $\alpha$. We write $d_j=(\alpha_j,\alpha_j)/2$, and
$q_j=q^{d_j}$. $\alpha_j^\vee=2\alpha_j/(\alpha_j,\alpha_j)$ are the coroots of
$\alpha_j$, and $\langle\lambda,\alpha_j^\vee\rangle
:=\frac{2(\lambda,\alpha_j)}{(\alpha_j,\alpha_j)}$ for any $\lambda\in P$.

Let $\Uq$ be the quantized universal enveloping algebra (QUEA) of $\g$
over $\mathbb{K}:=\mathbb Q(q)$ with $q$ transcendental, generated by $E_j,F_j,K_j^{\pm1}$ $(j\in I)$ subject to the standard relations.
% $\Uq$ has a triangular
%decomposition $\Uq\ \cong\ U^-\ot U^0\ot U^+$ as a $\K$-vector space, where $U^+$ and $U^-$ are the subalgebras generated by the
%$E_j$ and by the $F_j$ respectively, and $U^0$ is generated by the $K_j^{\pm1}$. Then
%$U^\pm$ carries a $\Qp$-grading:
%$U^+=\bigoplus_{\eta\in\Qp}U^+_\eta,\; U^-=\bigoplus_{\eta\in\Qp}U^-_{-\eta},
%\; E_j\in U^+_{\alpha_j},\; F_j\in U^-_{-\alpha_j}$. 
The Hopf structure on $\Uq$ is defined by,
\begin{equation}\label{eq:hopf}
\Delta(E_j)=E_j\ot1+K_j\ot E_j,\qquad
\Delta(F_j)=F_j\ot K_j^{-1}+1\ot F_j,\qquad
\Delta(K_j)=K_j\ot K_j,
\end{equation}
$\varepsilon(E_j)=\varepsilon(F_j)=0$, $\varepsilon(K_j)=1$,
$S(E_j)=-K_j^{-1}E_j$, $S(F_j)=-F_jK_j$ and $S(K_j)=K_j^{-1}$. 

A finite dimensional $\Uq$-module $M$ is of \emph{type $1$} if $M=\bigoplus_{\nu\in P}M_\nu$, where
\[
M_\nu:=\{\,u\in M\mid K_j\,u=q^{(\nu,\alpha_j)}u\ \text{ for all }j\in I\,\}.
\]
%The graded pieces of $U^\pm$ then shift weights, $U^+_\eta M_\nu\subseteq M_{\nu+\eta}$
%and $U^-_{-\eta}M_\nu\subseteq M_{\nu-\eta}$ for $\eta\in\Qp$. 
The dual $M^*$ is also equipped with a $\Uq$-module structure, defined using the antipode $S$
by $\langle x.\xi,u\rangle:=\langle\xi,S(x)u\rangle$ for $x\in\Uq$, $\xi\in M^*$ and
$u\in M$. The category of finite-dimensional type-$1$
$\Uq$-modules is semisimple; its simple objects are the modules $V(\lambda)$ of highest
weight $\lambda\in P_+$. The weight spaces $V(\lambda)_{w\lambda}$, for any $w\in W$, are called the extremal weight
spaces, and these are one-dimensional. Furthermore, the unique lowest weight of $V(\lambda)$ is $w_0\lambda$.

\subsection{Crystal bases and the tensor product rule}\label{ss:crystal}

Let $A_0\subseteq\K$ denote the ring of rational functions in $q$ that are regular at
$q=0$. It is a local ring with maximal ideal $qA_0$.
$A_0^\times:=\{f\in A_0\mid f(0)\neq0\}$, in particular $1+qA_0\subseteq A_0^\times$. On a finite dimensional type-$1$ module, we assume the lower Kashiwara operators $\tilde e_j,\tilde f_j$
$(j\in I)$ act as in \cite{Kashiwara}. Fix $\lambda\in P_+$ and a highest weight
vector $v_\lambda \in V(\lambda)$, define
\[
L(\lambda):= \sum_{j_1,\dots,j_\ell\in I}
A_0\,\tilde f_{j_1}\cdots\tilde f_{j_\ell}\,v_\lambda\ \subseteq\ V(\lambda),
\]
\[
B(\lambda):=\big\{\,\tilde f_{j_1}\cdots\tilde f_{j_\ell}\,v_\lambda
\ \ \mathrm{mod}\ qL(\lambda)\ \big|\ j_1,\dots,j_\ell\in I\,\big\}
\ \smallsetminus\ \{0\}\ \subseteq\ L(\lambda)/qL(\lambda).
\]
By \cite{Kashiwara}, the pair $(L(\lambda),B(\lambda))$ is the \emph{lower crystal base}
of $V(\lambda)$. We call the elements of $B(\lambda)$
nodes, and we write $\wt b$ for its weight. Since $\g$ is of finite type, $B(\lambda)$ has a unique lowest weight node $b_{w_0\lambda}$ annihilated by all the $\tilde f_j$, and every node of $B(\lambda)$ is reachable from
$b_{w_0\lambda}$ using only the raising operators $\tilde e_j$. In other words, $B(\lambda)=\big\{\,\tilde e_{j_1}\cdots\tilde e_{j_\ell}\,b_{w_0\lambda}
\ \big|\ j_1,\dots,j_\ell\in I\,\big\}\ \smallsetminus\ \{0\}$.

To define the tensor product crystal, we need a slightly different coproduct on $\Uq$.
\begin{equation}\label{eq:deltaminus}
\Delta_-(E_j)=E_j\ot K_j^{-1}+1\ot E_j,\qquad
\Delta_-(F_j)=F_j\ot1+K_j\ot F_j,\qquad
\Delta_-(K_j)=K_j\ot K_j.
\end{equation}
For two type-1 $\Uq$-modules $V,W$ denote by $V\ot_- W$ and $V\ot_+ W$ the tensor products formed using
\eqref{eq:deltaminus} and \eqref{eq:hopf}. Set
$\varepsilon_j(b)=\max\{n:\tilde e_j^{\,n}b\neq0\}$ and
$\varphi_j(b)=\max\{n:\tilde f_j^{\,n}b\neq0\}$, so that
$\varphi_j(b)-\varepsilon_j(b)=\langle\wt b,\alpha_j^\vee\rangle$. Then, for any 
$\lambda,\lambda'\in P_+$ the pair
$(L(\lambda)\ot L(\lambda'),\;B(\lambda)\ot B(\lambda'))$ is the lower crystal base of
$V(\lambda)\ot_-V(\lambda')$, and
\begin{equation}\label{eq:tensorrule}
\tilde e_j(b_1\ot b_2)=
\begin{cases}\tilde e_j b_1\ot b_2,&\varphi_j(b_1)\ge\varepsilon_j(b_2),\\
b_1\ot\tilde e_j b_2,&\varphi_j(b_1)<\varepsilon_j(b_2),\end{cases}
\qquad
\tilde f_j(b_1\ot b_2)=
\begin{cases}\tilde f_j b_1\ot b_2,&\varphi_j(b_1)>\varepsilon_j(b_2),\\
b_1\ot\tilde f_j b_2,&\varphi_j(b_1)\le\varepsilon_j(b_2).\end{cases}
\end{equation}

\subsection{Quantum coordinate rings and generalized quantum minors}\label{ss:oq}
For a finite dimensional type-1 $\Uq$-module $U$ and $u\in U$, $\xi\in U^*$, we denote
$c^{U}(\xi,u)\in\Uq^*$ for the matrix coefficient $c^U(\xi,u)(X):=\langle\xi,X.u
\rangle$. Let $\mathcal{O}_q(G)\subseteq\Uq^*$ be the \emph{quantized coordinate ring} spanned by matrix coefficients of all finite-dimensional type-1 $\Uq$-modules. Let $\lambda\in P_+$ and $w_1,w_2\in W$. Fix a non-zero vector $v_{w_2\lambda}$ spanning $V(\lambda)_{w_2\lambda}$, and let
$\xi_{w_1\lambda}\in(V(\lambda)^*)_{-w_1\lambda}$ be the dual functional of the non-zero vector $v_{w_1\lambda}\in V(\lambda)_{w_1\lambda}$. Then the associated
\emph{generalized quantum minor} is defined by
\[
\Delta_{w_1\lambda,\,w_2\lambda}:=c^{V(\lambda)}\big(\xi_{w_1\lambda},\,v_{w_2\lambda}\big)
\ \in\ \mathcal{O}_q(G),
\]
and it is unique up to a non-zero scalar. For finite dimensional type-1 $\Uq$-modules $U_1$ and $U_2$
there is a $\Uq$-module isomorphism $U_2^*\ot U_1^*\ \xrightarrow{\ \sim\ }\ (U_1\ot U_2)^*$ by assigning $\xi_2\otimes \xi_1\to [u_1\otimes u_2 \to \xi_1(u_1).\xi_2(u_2)]$
under which
\begin{equation}\label{eq:mult}
c^{U_1\ot U_2}(\xi_2\ot\xi_1,\ u_1\ot u_2)=c^{U_1}(\xi_1,u_1)\,c^{U_2}(\xi_2,u_2)
\end{equation}
for all $u_1\in U_1$, $u_2\in U_2$, $\xi_2\in U_2^*$ and $\xi_1\in U_1^*$.

\section{Quasi-minuscule modules}\label{qm}
A non-zero dominant weight $\vp \in P_+$ is \emph{minuscule} if every weight of $V(\vp)$ lies in the Weyl group orbit of $\vp$. Similarly, a non-zero dominant weight $\vp \in P_+$ is called \emph{quasi-minuscule} if it is either minuscule or the only weight of $V(\vp)$ not in the orbit $W\cdot\vp$ is the zero weight. Each simple Lie algebra admits a unique quasi-minuscule highest weight module that is not minuscule with the highest weight = the highest short root of the associated root system. 
\subsection{The general construction}\label{ss:qmgen}
Let $\Phis\subseteq\Phi$ be the set
of all short roots, and let
$\Pis:=\Pi\cap\Phis$ be the set of all short simple roots. The set $\Phis$ forms a single
$W$-orbit containing a unique dominant weight $\vp$, the highest short root. The
quasi-minuscule module $V:=V(\vp)$ admits a weight basis
\begin{equation}\label{eq:qmbasis}
\{v_\mu\mid\mu\in\Phis\}\ \sqcup\ \{z_{\alpha_k}\mid\alpha_k\in\Pis\},
\qquad v_\mu\in V_\mu,\quad z_{\alpha_k}\in V_0,
\end{equation}
so that the non-zero weights of $V$ are all extremal weights, each of multiplicity one. $\dim V_0=|\Pis|$. Write $[2]_{q_j}=q_j+q_j^{-1}$. The
$\Uq$-module action on $V(\vp)$ is defined as follows \cite[5A.2]{Jantzen}.

\noindent\emph{(Q1)} For all $j\in I$,
\[
K_j\,v_\mu=q^{(\mu,\alpha_j)}v_\mu,\qquad K_j\,z_{\alpha_k}=z_{\alpha_k}.
\]

\noindent\emph{(Q2)} For $\mu\in\Phis$ with
$\mu\neq\pm\alpha_j$ one has $\langle\mu,\alpha_j^\vee\rangle\in\{0,\pm1\}$, and
\[
E_j\,v_\mu=
\begin{cases}v_{\mu+\alpha_j},&\langle\mu,\alpha_j^\vee\rangle=-1,\\ 0,&\text{else},\end{cases}
\qquad
F_j\,v_\mu=
\begin{cases}v_{\mu-\alpha_j},&\langle\mu,\alpha_j^\vee\rangle=1,\\ 0,&\text{else}.\end{cases}
\]

\noindent\emph{(Q3)} If $\alpha_j\notin\Pis$, then
$E_j\,z_{\alpha_k}=F_j\,z_{\alpha_k}=0$ for all $\alpha_k\in\Pis$.

\noindent\emph{(Q4)} If $\alpha_j\in\Pis$, then
\[
\begin{aligned}
&E_j\,v_{-\alpha_j}=z_{\alpha_j}, &\quad &E_j\,z_{\alpha_j}=[2]_{q_j}v_{\alpha_j}, &\quad &E_j\,v_{\alpha_j}=0,\\
&F_j\,v_{\alpha_j}=z_{\alpha_j}, &\quad &F_j\,z_{\alpha_j}=[2]_{q_j}v_{-\alpha_j}, &\quad &F_j\,v_{-\alpha_j}=0.
\end{aligned}
\]

\noindent\emph{(Q5)} If $\alpha_j,\alpha_k\in\Pis$ with
$j\neq k$, then
\[
E_j\,z_{\alpha_k}=
\begin{cases}v_{\alpha_j},&\langle\alpha_k,\alpha_j^\vee\rangle=-1,\\ 0,&\text{else},\end{cases}
\qquad
F_j\,z_{\alpha_k}=
\begin{cases}v_{-\alpha_j},&\langle\alpha_k,\alpha_j^\vee\rangle=-1,\\ 0,&\text{else}.\end{cases}
\]

When $\g$ is simply-laced, $V$ is the quantum adjoint representation. In this case, the relations
\emph{(Q1)}--\emph{(Q5)} coincide verbatim with those of \cite[Theorem~5.10]{OQY} and
with Lusztig's description \cite{Lusztig} of the canonical basis of the quantum adjoint
representation.

\begin{proposition}\label{prop:crystal}
The $A_0$-submodule $L(\vp)\ =\ \bigoplus_{\mu\in\Phis}A_0\,v_\mu\ \oplus\ \bigoplus_{\alpha_k\in\Pis}A_0\,z_{\alpha_k}\ \subseteq\ V(\vp)$ is the lower crystal lattice of $V(\vp)$, with the highest weight vector $v_{\vp}$. Its reduction $B(\vp)\ =\ \{\,\overline{v}_\mu\mid\mu\in\Phis\,\}\ \sqcup\
\{\,\overline{z_{\alpha_k}}\mid\alpha_k\in\Pis\,\}\ \subseteq\ L(\vp)/qL(\vp)$
is the lower crystal basis.
\end{proposition}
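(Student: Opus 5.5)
We verify Kashiwara's characterisation of a lower crystal base directly on this lattice. Stability of $L(\vp)$ under $\tilde e_j,\tilde f_j$, and the fact that $B(\vp)$ is a basis permuted by these operators, are checked $\mathfrak{sl}_2$-string by string. Since $L(\vp)$ is a lattice spanned by weight vectors, by uniqueness of crystal bases it then suffices to show two things: that $L(\vp)=\sum A_0\tilde f_{j_1}\cdots\tilde f_{j_\ell}v_\vp$, and that the nonzero images modulo $qL(\vp)$ are exactly the listed vectors.

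Fix $j\in I$ and decompose $V$ under $U_{q_j}(\mathfrak{sl}_2)_j=\langle E_j,F_j,K_j^{\pm1}\rangle$ using (Q1)--(Q5).

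\emph{Case $\alpha_j$ long.} By (Q2) and (Q3), each $v_\mu$ with $\langle\mu,\alpha_j^\vee\rangle=\pm1$ lies in a two-dimensional string $v_\mu\leftrightarrow v_{\mu-\alpha_j}$, on which $E_j,F_j$ act by $1$. Every other $v_\mu$, and every $z_{\alpha_k}$, is a trivial one-dimensional string. On a two-dimensional string, $\tilde f_j v_\mu=F_jv_\mu=v_{\mu-\alpha_j}$ and $\tilde e_j v_{\mu-\alpha_j}=v_\mu$. Note that $\pm\alpha_j$ is not short here, so no exceptions arise.

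\emph{Case $\alpha_j$ short.} Here $v_{\alpha_j},z_{\alpha_j},v_{-\alpha_j}$ span a three-dimensional string. With $u_0=v_{\alpha_j}$ we have $F_ju_0=z_{\alpha_j}$ and $F_j^{(2)}u_0=[2]_{q_j}^{-1}F_j z_{\alpha_j}=v_{-\alpha_j}$. Since Kashiwara's lower operator sends $F_j^{(n)}u_0\mapsto F_j^{(n+1)}u_0$, we get $\tilde f_j v_{\alpha_j}=z_{\alpha_j}$ and $\tilde f_jz_{\alpha_j}=v_{-\alpha_j}$ exactly.

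The remaining vectors are handled as follows. Each $z_{\alpha_k}$ with $k\ne j$ and $\langle\alpha_k,\alpha_j^\vee\rangle=-1$ is not itself $j$-primitive, since $E_jz_{\alpha_k}=v_{\alpha_j}$. We therefore replace it by the primitive vector
\[
z'_{\alpha_k}:=z_{\alpha_k}-[2]_{q_j}^{-1}z_{\alpha_j},
\]
which satisfies $E_jz'_{\alpha_k}=v_{\alpha_j}-v_{\alpha_j}=0$ and likewise $F_jz'_{\alpha_k}=0$. Thus $z'_{\alpha_k}$ spans a trivial string. Since $[2]_{q_j}^{-1}=q_j/(1+q_j^2)\in qA_0$, we have $z'_{\alpha_k}\equiv z_{\alpha_k}\bmod qL(\vp)$. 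The $z_{\alpha_k}$ with $\langle\alpha_k,\alpha_j^\vee\rangle=0$ are already trivial strings. The $v_\mu$ with $\mu\ne\pm\alpha_j$ fall into two-dimensional strings or trivial strings exactly as in the long case.

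This yields a decomposition
\[
L(\vp)=\bigoplus(\text{$A_0$-spans of the strings}),
\]
in which each string is spanned by $F_j^{(n)}$ applied to a primitive vector lying in $L(\vp)$. The change of basis $z_{\alpha_k}\mapsto z'_{\alpha_k}$ is unitriangular over $A_0$, so the decomposition is valid over $A_0$. It follows that $\tilde e_j,\tilde f_j$ preserve $L(\vp)$. Modulo $q$ they act on $\{\bar v_\mu,\bar z_{\alpha_k}\}$ by the expected partial permutations, with $\tilde f_j\bar z_{\alpha_k}=0$ for $k\neq j$. Hence $\tilde f_j\bar b=\bar b'$ if and only if $\tilde e_j\bar b'=\bar b$.

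It remains to show that $L(\vp)$ is generated by $\tilde f$-paths from $v_\vp$. Every short root $\mu$ can be reached from $\vp$ by a chain of simple reflections lowering by one root at a time, which gives $v_\mu$ as an $\tilde f$-path. Each $z_{\alpha_k}=\tilde f_kv_{\alpha_k}$. Hence $L(\vp)$ equals the Kashiwara lattice $L(\lambda)$, and $B(\vp)$ is the claimed set; alternatively one cites uniqueness of lower crystal bases.

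The main obstacle is the non-simply-laced bookkeeping in the short-root case. One must check that the correction term $[2]_{q_j}^{-1}z_{\alpha_j}$ genuinely lies in $qL(\vp)$, which it does because $[2]_{q_j}^{-1}\in qA_0$. One must also check the coefficient conventions in (Q4), namely $F_jz_{\alpha_j}=[2]_{q_j}v_{-\alpha_j}$, against the divided-power normalisation. This is where the computation could fail if the conventions differed.
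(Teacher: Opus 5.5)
Your argument is correct. The paper does not prove this proposition itself; it simply refers to Jantzen for it and for \eqref{eq:zkstring}. You instead give a self-contained rank-one ($\mathfrak{sl}_2$-string) verification, and its key steps hold up.

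\begin{itemize}
\item For $\alpha_j$ short, $F_j^{(2)}v_{\alpha_j}=[2]_{q_j}^{-1}F_jz_{\alpha_j}=v_{-\alpha_j}$. So the normalisation in (Q4) is exactly the divided-power one, and the worry in your last paragraph is already settled by your own computation.
\item The vectors $z_{\alpha_k}-[2]_{q_j}^{-1}z_{\alpha_j}$ are killed by $E_j$ and $F_j$, and the change of basis is unitriangular over $A_0$. Hence $L(\vp)$ is the direct sum of the $A_0$-spans of the $j$-strings, so it is $\tilde e_j,\tilde f_j$-stable.
\item Every $v_\mu$ and every $z_{\alpha_k}$ is exactly, not just modulo $q$, an $\tilde f$-monomial applied to $v_\vp$. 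Together with stability, this identifies your lattice and basis with $L(\lambda)$ and $B(\lambda)$ as defined in \S\ref{ss:crystal}. Your appeal to uniqueness of crystal bases is therefore redundant, though not wrong.
\end{itemize}

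What your route buys over the citation is that \eqref{eq:zkstring} comes out of the same computation. For $j\neq k$, both $\tilde f_jz_{\alpha_k}$ and $\tilde e_jz_{\alpha_k}$ are either $0$ or $[2]_{q_j}^{-1}v_{\mp\alpha_j}\in qL(\vp)$. Meanwhile $\overline{z_{\alpha_k}}$ is the middle of a $k$-string of length three. The citation buys only brevity.

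One wording point: a chain of simple reflections from $\vp$ to a negative short root is not \emph{lowering by one root at a time} at the step $\alpha_j\mapsto-\alpha_j$. That step must be realised as $\tilde f_j^{\,2}$ passing through $z_{\alpha_j}$. Your short-root computation already covers this, so only the phrasing needs fixing.
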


We write $b_\mu:=\overline v_\mu\ (\mu\in\Phis),\;\;
b_{k,0}:=\overline{z_{\alpha_k}}\ (\alpha_k\in\Pis)$. The highest weight node is $b_\vp$, the lowest weight node is $b_{-\vp}$ (because $w_0.\vp= -\vp$). For any $\alpha_k\in\Pis$, the node $b_{k,0}$ is the \emph{unique}
zero-weight node of $B(\vp)$ for which
\begin{equation}\label{eq:zkstring}
\varepsilon_j(b_{k,0})=\varphi_j(b_{k,0})=\delta_{k,j}\qquad(j\in I).
\end{equation}
For a proof of Proposition~\ref{prop:crystal} and \eqref{eq:zkstring}, see \cite[Lemma 9.6.B]{Jantzen} and the subsequent proof.

\subsection{The three types}\label{ss:threetypes}

From now on, $\g$ (unless otherwise stated) is of type $G_2$, $F_4$, or $E_8$. We assume the Bourbaki convention \cite[Ch. VI, Plates~VII--IX]{Bourbaki} for the Dynkin diagrams below. Short simple roots are denoted by filled nodes, and long simple roots by empty nodes.

\begin{center}
\begin{tabular}{@{}c@{\quad}c@{\quad}c@{}}
$G_2:$\;
\begin{tikzpicture}[baseline=-3pt]
  \draw (0,0.07)--(0.8,0.07);
  \draw (0,0)--(0.8,0);
  \draw (0,-0.07)--(0.8,-0.07);
  \draw[-{Stealth[length=4.5pt,width=3.5pt]}] (0.53,0)--(0.25,0);
  \dnodeS{0,0}; \dnodeL{0.8,0};
  \node[below=3pt] at (0,0) {\small$\alpha_1$};
  \node[below=3pt] at (0.8,0) {\small$\alpha_2$};
\end{tikzpicture}
&
$F_4:$\;
\begin{tikzpicture}[baseline=-3pt]
  \draw (0,0)--(0.8,0);
  \draw (0.8,0.05)--(1.6,0.05);
  \draw (0.8,-0.05)--(1.6,-0.05);
  \draw[-{Stealth[length=4.5pt,width=3.5pt]}] (1.07,0)--(1.35,0);
  \draw (1.6,0)--(2.4,0);
  \dnodeL{0,0}; \dnodeL{0.8,0}; \dnodeS{1.6,0}; \dnodeS{2.4,0};
  \node[below=3pt] at (0,0) {\small$\alpha_1$};
  \node[below=3pt] at (0.8,0) {\small$\alpha_2$};
  \node[below=3pt] at (1.6,0) {\small$\alpha_3$};
  \node[below=3pt] at (2.4,0) {\small$\alpha_4$};
\end{tikzpicture}
&
$E_8:$\;
\begin{tikzpicture}[baseline=-3pt]
  \draw (0,0)--(4.8,0);
  \draw (1.6,0)--(1.6,0.75);
  \dnodeS{0,0}; \dnodeS{0.8,0}; \dnodeS{1.6,0}; \dnodeS{2.4,0};
  \dnodeS{3.2,0}; \dnodeS{4.0,0}; \dnodeS{4.8,0}; \dnodeS{1.6,0.75};
  \node[below=3pt] at (0,0) {\small$\alpha_1$};
  \node[below=3pt] at (0.8,0) {\small$\alpha_3$};
  \node[below=3pt] at (1.6,0) {\small$\alpha_4$};
  \node[below=3pt] at (2.4,0) {\small$\alpha_5$};
  \node[below=3pt] at (3.2,0) {\small$\alpha_6$};
  \node[below=3pt] at (4.0,0) {\small$\alpha_7$};
  \node[below=3pt] at (4.8,0) {\small$\alpha_8$};
  \node[right=3pt] at (1.6,0.75) {\small$\alpha_2$};
\end{tikzpicture}
\end{tabular}
\end{center}

Under this labeling, $\vp=\vp_i$, where
$i=1\ \ (G_2),\;i=4\ \ (F_4),\;i=8\ \ (E_8)$. We fix this index $i$ once and for all. It again follows that $\alpha_i\in\Pis$, and
$z_{\alpha_i}\in V_0$ together with the zero-weight node
$b_{i,0}=\overline{z_{\alpha_i}}$ of Proposition~\ref{prop:crystal} are well-defined.
Furthermore, $\dim V=7,\ 26,\ 248,\; m:=\dim V_0=|\Pis|=1,\ 2,\ 8$. In each of these three types $-w_0=\mathrm{id}$ and $P=Q$; consequently
$(\lambda,\mu)\in\mathbb Z$ for all $\lambda,\mu\in P$. $V$ appears as a direct summand in $V\ot V$ with multiplicity 1. We use the short-hand notation $[V\ot V:V]=1$ to recall this fact later on (see Lemma \cite[5A.9b]{Jantzen}).

\section{Factorization of weights and the braiding twist}\label{main}

\subsection{The two-sided weight control theorem}\label{ss:twosided}

Applying \eqref{eq:tensorrule} to $B(\vp)\ot B(\vp)$ and using $[V\ot V:V]=1$, there is
a unique connected component $C\cong B(\vp)$ inside $B(\vp)\ot B(\vp)$. The highest and lowest weight nodes of $C$ are of the form
\begin{equation}\label{eq:sourcesink}
x=b_\vp\ot c,\qquad x_{\mathrm{low}}=b'\ot b_{-\vp}
\end{equation}
with $\wt c=\wt b'=0$.
The following proposition generalizes \cite[Lemma~3.1]{Dey}, which establishes that the right tensor factor of every node of $C$ except the highest weight node $x$ has negative weight. Here, we also prove a similar statement for the left tensor factor. Hence, the weights of both factors are controlled simultaneously for every node except $x$ and $x_{low}$ of $C$.

\begin{proposition}\label{prop:twosided}
Let $C\cong B(\vp)$ be the connected component of $B(\vp)\ot B(\vp)$ with $x$
and $x_{\mathrm{low}}$ as in \eqref{eq:sourcesink}. Then $c=b'=b_{i,0}$. Moreover, every node
$y=b_1\ot b_2\in C$ with $y\neq x,\,x_{\mathrm{low}}$ must have $\wt b_1>0$ and $\wt b_2<0$.
\end{proposition}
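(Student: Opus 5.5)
First I identify $c$ and $b'$ via the tensor product rule \eqref{eq:tensorrule}. The node $x=b_\vp\ot c$ is a highest weight node, so $\tilde e_j x=0$ for all $j$. Since $\tilde e_j b_\vp=0$, the rule shows $\tilde e_j(b_\vp\ot c)=b_\vp\ot\tilde e_j c$ exactly when $\varphi_j(b_\vp)<\varepsilon_j(c)$. Hence highest weight means $\varepsilon_j(c)\le\varphi_j(b_\vp)=\langle\vp,\alpha_j^\vee\rangle=\delta_{ij}$ for all $j$. A zero-weight node $c$ has $\varepsilon_j(c)=\varphi_j(c)$. The zero-weight nodes are the $b_{k,0}$, and by \eqref{eq:zkstring} $\varepsilon_j(b_{k,0})=\delta_{kj}$, so we need $\delta_{kj}\le\delta_{ij}$ for all $j$, which forces $k=i$. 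Thus $c=b_{i,0}$. This is consistent with $[V\ot V:V]=1$, which says there is exactly one such highest weight node. Dually, $x_{\rm low}=b'\ot b_{-\vp}$ is annihilated by all $\tilde f_j$, which gives $\varphi_j(b')\le\varepsilon_j(b_{-\vp})=\delta_{ij}$, and hence $b'=b_{i,0}$.

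For the weight control I use \cite[Lemma~3.1]{Dey}: every $y=b_1\ot b_2\in C$ with $y\ne x$ has $\wt b_2<0$. I obtain the left-factor statement from the right-factor one by a symmetry argument. Consider the crystal involution $b\mapsto b^\vee$ on $B(\vp)$ that reverses weights and swaps the roles of $\tilde e_j$ and $\tilde f_j$. Since $-w_0=\mathrm{id}$, one can take the Lusztig/Schützenberger involution, or concretely $b_\mu\mapsto b_{-\mu}$ and $b_{k,0}\mapsto b_{k,0}$, which is compatible with (Q1)--(Q5) because those relations are symmetric under $E\leftrightarrow F$, $\mu\leftrightarrow-\mu$. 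The map
\[
b_1\ot b_2\mapsto b_2^\vee\ot b_1^\vee
\]
on $B(\vp)\ot B(\vp)$ intertwines $\tilde e_j$ with $\tilde f_j$ under the tensor rule \eqref{eq:tensorrule}. Checking this is a routine comparison of the two cases, using $\varphi_j(b^\vee)=\varepsilon_j(b)$. It therefore sends connected components to connected components. It also preserves the isomorphism type $B(\vp)$, because the component $C$ is self-dual ($-w_0\vp=\vp$). By uniqueness of the $B(\vp)$-component it maps $C$ to itself, and it sends $x$ to $x_{\rm low}$: indeed $(b_\vp\ot b_{i,0})\mapsto b_{i,0}\ot b_{-\vp}$. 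Applying the right-factor statement to the image, every $y=b_1\ot b_2\ne x_{\rm low}$ in $C$ has $\wt(b_1^\vee)<0$, i.e.\ $\wt b_1>0$. Combining the two statements gives the claim for $y\neq x,x_{\rm low}$.

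The main obstacle is checking that the flip map really is a crystal anti-morphism for the tensor rule in the convention \eqref{eq:tensorrule}. The boundary cases with $\ge$ versus $>$ must match up after swapping factors. They do: $\tilde e_j$ acts on the left factor iff $\varphi_j(b_1)\ge\varepsilon_j(b_2)$, and after the flip, $\tilde f_j$ acts on the right factor $b_1^\vee$ iff $\varphi_j(b_2^\vee)\le\varepsilon_j(b_1^\vee)$, i.e.\ $\varepsilon_j(b_2)\le\varphi_j(b_1)$.

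If this approach failed, the fallback would be to reprove the left-factor bound directly, along the lines of \cite[Lemma~3.1]{Dey}. One would descend from $x_{\rm low}$ using the $\tilde e_j$. The induction would show that once the left factor has positive weight, any $\tilde e_j$ that acts on the left only raises it, and any $\tilde e_j$ that acts on the right leaves it unchanged.
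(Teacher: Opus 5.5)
Your proof is correct, but for the left-factor half you take a different route from the paper. The identification $c=b'=b_{i,0}$ is essentially the paper's argument. The paper first derives $\varepsilon_j(c)=\delta_{ij}$ and then invokes \eqref{eq:zkstring}. You test $\delta_{kj}\le\delta_{ij}$ on each zero-weight node $b_{k,0}$, which is equivalent and slightly more direct.

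For the weight control, the paper runs two parallel inductions. It descends from $x$ with the $\tilde f_j$ for the right factor, with base case $\tilde f_j x=0$ for $j\ne i$ and $\tilde f_i x=b_\vp\ot\tilde f_i b_{i,0}$. It ascends from $x_{\mathrm{low}}$ with the $\tilde e_j$ for the left factor. You instead take the right-factor half from \cite[Lemma~3.1]{Dey} and transport it to the left factor via the anti-automorphism $\sigma(b_1\ot b_2)=b_2^\vee\ot b_1^\vee$. Your check that $\sigma\tilde e_j=\tilde f_j\sigma$ under \eqref{eq:tensorrule} is right. Also, $\sigma(x)=x_{\mathrm{low}}\in C$ already forces $\sigma(C)=C$, so the appeal to the isomorphism type and uniqueness of the $B(\vp)$-component is unnecessary.

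Each route has its advantage:
\begin{itemize}
\item The paper's version is self-contained and uses nothing about $B(\vp)$ beyond \eqref{eq:zkstring}.
\item Yours halves the inductive work and explains why the two statements mirror each other. It would even give $b'=b_{i,0}$ for free from $c=b_{i,0}$, since $x_{\mathrm{low}}=\sigma(x)$.
\end{itemize}

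The cost of your route is justifying that $b_\mu\mapsto b_{-\mu}$, $b_{k,0}\mapsto b_{k,0}$ is a crystal anti-automorphism. Argue this at the module level: the linear map $v_\mu\mapsto v_{-\mu}$, $z_{\alpha_k}\mapsto z_{\alpha_k}$ preserves $L(\vp)$, swaps $E_j$ with $F_j$ and $K_j$ with $K_j^{-1}$ by (Q1)--(Q5), and lower Kashiwara operators are symmetric on $j$-strings under such a twist. Alternatively, check it directly on the crystal graph using \eqref{eq:zkstring}. Do not read crystal edges off (Q5): $E_j z_{\alpha_k}$ can be nonzero while $\tilde e_j b_{k,0}=0$ for $k\ne j$.

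Finally, the right-factor induction is only a few lines, and its base case is exactly where $c=b_{i,0}$ is used. It is better to include it than to rely on the citation.
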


\begin{proof}
Since $\wt c=0$ and $c\neq b_\vp$, $\varepsilon_j(c)>0$ for some $j$. As $x$ is the highest weight node of $C$, $\varepsilon_j(c)\le\varphi_j(b_\vp)=\langle\vp,\alpha_j^\vee\rangle=\delta_{ji}$ for all
$j$. Hence
\begin{equation}\label{eq:c}
\varepsilon_i(c)=1,\qquad \varepsilon_j(c)=0\quad(j\neq i).
\end{equation}
$\wt c=0$ also implies $\varphi_i(c)=\varepsilon_i(c)+\langle\wt c,\alpha_i^\vee\rangle=1$
and $\varphi_j(c)=0$ for $j\neq i$. Therefore, \eqref{eq:zkstring} identifies
$c=b_{i,0}$. On the other hand, $\wt b'=0$ and $b'\neq b_{-\vp}$ give
$\varphi_j(b')>0$ for some $j$. As $x_{low}$ is the lowest weight node, $\varphi_j(b')\le\varepsilon_j(b_{-\vp})=\delta_{ji}$ for all $j$. Thus, we get
\begin{equation}\label{eq:b}
\varphi_i(b')=1,\qquad \varphi_j(b')=0\quad(j\neq i).
\end{equation}
Similarly, $\varepsilon_i(b')=1$, $\varepsilon_j(b')=0$ $(j\neq i)$, and
\eqref{eq:zkstring} again gives $b'=b_{i,0}=c$.

We first show that $\wt b_2<0$ for every node $y\neq x$. Any such node
$y$ is of the form $y=\tilde f_{j_\ell}\cdots\tilde f_{j_1}x$ for some
$j_1,\dots,j_\ell$ with the intermediate applications nonzero. We induct on
$\ell$. When $j\neq i$, \eqref{eq:c} gives
$\varphi_j(b_\vp)=0=\varepsilon_j(b_{i,0})$, so by \eqref{eq:tensorrule} the operator
$\tilde f_j$ acts on the right factor; but $\varphi_j(b_{i,0})=\varepsilon_j(b_{i,0})
+\langle\wt b_{i,0},\alpha_j^\vee\rangle=0$. So $\tilde f_j b_{i,0}=0$ and $\tilde f_j x=0$. When $j=i$,
$\varphi_i(b_\vp)=1=\varepsilon_i(b_{i,0})$, so $\tilde f_i$ again acts on the right, and
$\varphi_i(b_{i,0})=1$ gives
\[
\tilde f_i x=b_\vp\ot\tilde f_i b_{i,0},\qquad \wt(\tilde f_i b_{i,0})=\wt b_{i,0}-\alpha_i=-\alpha_i<0.
\]
Thus the assertion holds for $\ell=1$. For the inductive step, suppose $y=b_1\ot
b_2$ with $\wt b_2<0$, and let $y'=\tilde f_j y\neq0$. By \eqref{eq:tensorrule},
$\tilde f_j$ acts either on $b_1$, leaving $b_2$ unchanged, or on $b_2$, replacing
it by $\tilde f_j b_2$ of weight $\wt b_2-\alpha_j$. In either case the right
factor of $y'$ has negative weight, completing the induction.

We now show that $\wt b_1>0$ for every $y\neq x_{\mathrm{low}}$. As $C\cong B(\vp)$, any node $y\neq x_{\mathrm{low}}$ is of the form
$y=\tilde e_{j_\ell}\cdots\tilde e_{j_1}x_{\mathrm{low}}$ with the intermediate
applications nonzero. We again induct on $\ell$. For $\ell=1$ we compute $\tilde e_j
x_{\mathrm{low}}$. When $j\neq i$, \eqref{eq:b} gives
$\varphi_j(b_{i,0})=0=\varepsilon_j(b_{-\vp})$, so $\tilde e_j$ acts on the left factor;
but $\varepsilon_j(b_{i,0})=\varphi_j(b_{i,0})-\langle\wt b_{i,0},\alpha_j^\vee\rangle=0$, so $\tilde e_j
b_{i,0}=0$ and $\tilde e_j x_{\mathrm{low}}=0$. When $j=i$,
$\varphi_i(b_{i,0})=1=\varepsilon_i(b_{-\vp})$, so $\tilde e_i$ acts on the left, and
$\varepsilon_i(b_{i,0})=1$ gives
\[
\tilde e_i x_{\mathrm{low}}=(\tilde e_i b_{i,0})\ot b_{-\vp},\qquad
\wt(\tilde e_i b_{i,0})=\wt b_{i,0}+\alpha_i=\alpha_i>0.
\]
For the inductive step, suppose $y=b_1\ot b_2$ with $\wt b_1>0$ and let $y'=\tilde
e_j y\neq0$. By \eqref{eq:tensorrule}, $\tilde e_j$ acts either on $b_1$, replacing
it by $\tilde e_j b_1$ of weight $\wt b_1+\alpha_j$, or on $b_2$, leaving $b_1$
unchanged. In either case, the left factor of $y'$ has positive weight. 

So a node $y\in C\setminus \{x, x_{\mathrm{low}}\}$ must have $\wt b_1>0$ and $\wt b_2<0$.
\end{proof}

\begin{theorem}\label{thm:Tminus}
There is a $\Uq$-morphism $T_-\colon V\ot_- V\to V$ sending the product crystal lattice
onto $L(\vp)$, its crystal limit $\overline{T_-}$ restricts to a bijection
$C\xrightarrow{\sim}B(\vp)$ and annihilates every other component. It has the
following properties.
\begin{enumerate}
\item[\rm(i)] For $\mu\in W\vp$ with $\mu\neq\pm\vp$, the unique node of $C$ of
weight $\mu$ is of the form $y=b_{\mu_1}\ot b_{\mu_2}$ with $\mu_1,\mu_2\in\Phis$,
$\mu_1>0>\mu_2$ and $\mu_1+\mu_2=\mu$. Moreover,
\[
T_-(v_{\mu_1}\ot v_{\mu_2})=c_\mu\,v_\mu,\qquad c_\mu\in A_0^\times.
\]
\item[\rm(ii)] The $m$ zero-weight nodes of $C$ are $b_{\nu_k}\ot b_{-\nu_k}$,
$k=1,2,\dots,m$, with distinct short roots $\nu_k>0$, and the vectors
$\{T_-(v_{\nu_k}\ot v_{-\nu_k})\}_{k=1}^m$ form a basis of $V_0$.
\end{enumerate}
\end{theorem}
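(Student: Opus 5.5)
We plan to use the general theory of crystal bases to construct $T_-$, and then read off (i) and (ii) from Proposition~\ref{prop:twosided} together with the multiplicity-one structure of $V$.

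\emph{Construction of $T_-$.} By Kashiwara's theory, $(L(\vp)\ot L(\vp),B(\vp)\ot B(\vp))$ is a crystal base of $V\ot_-V$. The module $V\ot_-V$ decomposes as $\bigoplus_k V(\lambda_k)$ compatibly with the crystal lattice: $L(\vp)\ot L(\vp)=\bigoplus_k L_k$, and each $(L_k,B_k)$ is isomorphic to $(L(\lambda_k),B(\lambda_k))$ up to sign. Since $[V\ot V:V]=1$, exactly one summand is isomorphic to $V$, and its crystal is $C$.
\begin{itemize}
\item Let $T_-$ be the projection onto this summand, composed with the isomorphism onto $V$.
\item Normalize the isomorphism so that it sends the highest weight vector of the summand to $v_\vp$. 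This vector is congruent to $v_\vp\ot z_{\alpha_i}$ modulo $qL$, because $c=b_{i,0}$.
\end{itemize}
Uniqueness of crystal bases up to isomorphism (the lattice is determined as $\sum A_0\tilde f\cdots v$) then gives $T_-(L_k)=L(\vp)$ for the $V$-summand, with crystal limit $C\xrightarrow{\sim}B(\vp)$ a crystal isomorphism. Since $\operatorname{Hom}(V(\lambda_k),V)=0$ for $\lambda_k\ne\vp$, the map $T_-$ kills the other summands. Hence $\overline{T_-}$ annihilates every other component.

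\emph{Part (i).} For $\mu\in W\vp$ with $\mu\neq\pm\vp$, the node $y\in C$ of weight $\mu$ is unique, since the non-zero weights of $B(\vp)$ have multiplicity one. By Proposition~\ref{prop:twosided}, $y=b_1\ot b_2$ with $\wt b_1>0>\wt b_2$. Both weights are non-zero, so each factor is an extremal node, and $b_1=b_{\mu_1}$, $b_2=b_{\mu_2}$ with $\mu_1,\mu_2\in\Phis$.

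Now $\overline{T_-}(y)=b_\mu$, so $T_-(v_{\mu_1}\ot v_{\mu_2})\equiv v_\mu \bmod qL(\vp)$. Because $V_\mu=\K v_\mu$, we get $T_-(v_{\mu_1}\ot v_{\mu_2})=c_\mu v_\mu$ with $c_\mu\in 1+qA_0\subseteq A_0^\times$. There is a possible sign ambiguity coming from the choice of the isomorphism $C\cong B(\vp)$ up to sign; it is harmless, since $-1\in A_0^\times$.

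\emph{Part (ii).} The zero-weight nodes of $C$ correspond to the $m$ nodes $b_{k,0}$ of $B(\vp)$. None of them is $x$ or $x_{\mathrm{low}}$, whose weights are $\pm\vp$. So Proposition~\ref{prop:twosided} forces each to have the form $b_{\nu}\ot b_{-\nu}$ with $\nu>0$ a short root.
\begin{itemize}
\item The $\nu_k$ are distinct, because the nodes are distinct and each is determined by $\nu$.
\item The images $\overline{T_-}(b_{\nu_k}\ot b_{-\nu_k})$ are the $m$ distinct nodes $b_{k,0}$ (up to sign), which form a basis of $L(\vp)_0/qL(\vp)_0$.
\item The vectors $T_-(v_{\nu_k}\ot v_{-\nu_k})$ lie in $L(\vp)_0$, a free $A_0$-module of rank $m$.
\end{itemize}
By Nakayama's lemma over the local ring $A_0$, these vectors form an $A_0$-basis of $L(\vp)_0$, hence a $\K$-basis of $V_0$.

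\emph{Main obstacle.} The delicate step is the construction: justifying that the crystal-lattice decomposition of $V\ot_-V$ is compatible with the isotypic decomposition, so that the projection onto the unique $V$-summand preserves lattices and induces the bijection on $C$. I would cite Kashiwara's theorem that any crystal base of a finite-dimensional module decomposes as a direct sum of crystal bases of irreducibles. For the images, I would cite the uniqueness of the crystal base of $V(\vp)$ up to scalar (Proposition~\ref{prop:crystal} provides the explicit one). Once this is in place, (i) and (ii) follow formally as above.
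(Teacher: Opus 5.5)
Your proposal is correct and follows essentially the same route as the paper. You obtain $T_-$ as the projection onto the unique $V(\vp)$-summand in Kashiwara's lattice-compatible decomposition of $V\ot_- V$, then read off (i) and (ii) from Proposition~\ref{prop:twosided}, the multiplicity-one property of non-zero weights, and Nakayama's lemma over $A_0$; your remarks on normalizing the highest weight vector and on the harmless sign ambiguity only make explicit details the paper leaves implicit.
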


\begin{proof}
By the uniqueness of crystal bases \cite[Theorem 3]{Kashiwara} there is a
lattice-preserving $\Uq$-module isomorphism $S\colon V\ot_- V\xrightarrow{\sim}\bigoplus_k
V(\lambda_k)$ whose crystal limit identifies $B(\vp)\ot B(\vp)$ with
$\bigsqcup_k B(\lambda_k)$. As $[V\ot V:V]=1$, $V\ot V$ has exactly one summand isomorphic to $V(\vp)$ with crystal
$C\cong B(\vp)$. Write $T_-=\pi_{V(\vp)}\circ S$.

For (i), let $y=b_1\ot b_2$ be the unique node of $C$ of weight $\mu$. By
Proposition~\ref{prop:twosided}, $\wt b_1>0$ and $\wt b_2<0$; since the non-zero weights
of $B(\vp)$ are the short roots, each of multiplicity one, this forces
$b_1=b_{\mu_1}$ and $b_2=b_{\mu_2}$ for short roots $\mu_1>0>\mu_2$ with
$\mu_1+\mu_2=\wt y=\mu$. Now $\overline{T_-}(y)=b_\mu$ gives
$T_-(v_{\mu_1}\ot v_{\mu_2})\equiv v_\mu\pmod{qL(\vp)}$, and $\dim V_\mu=1$ lifts
the congruence to equality in (i) with $c_\mu\in 1+qA_0\subseteq  A_0^\times$.\\
For (ii), the zero-weight nodes are
neither the highest nor the lowest weight nodes of $C$, so Proposition~\ref{prop:twosided} gives their
factor weights $\nu_k>0$ and $-\nu_k<0$. The images
$\overline{T_-}(b_{\nu_k}\ot b_{-\nu_k})$ are the $m$ distinct zero-weight
nodes of $B(\vp)$, hence a basis of $L(\vp)_0/qL(\vp)_0$. Therefore, using Nakayama's lemma ($A_0$ is a local ring) it follows that $\{T_-(v_{\nu_k}\ot v_{-\nu_k})\}_{k=1}^m$ form an $A_0$-basis of $L(\vp)_0$, hence a
$\K$-basis of $V_0$.
\end{proof}

Let $\psi\colon V\ot_- V\xrightarrow{\sim}V\ot_+ V$ be Kashiwara's isomorphism
\cite[(1.4.5)]{Kashiwara}, $\psi(u\ot v)=q^{(\wt u,\wt v)}u\ot v$ on
weight vectors. Then $T:=T_-\circ\psi^{-1}\colon V\ot_+ V\to V$ is a surjective
$\Uq$-morphism for \eqref{eq:hopf}, and $T(v\ot w)=q^{-(\wt v,\wt w)}T_-(v\ot w)$ on
weight vectors. Consequently Theorem~\ref{thm:Tminus} holds for $T$ over $\K$. $T(v_{\mu_1}\ot v_{\mu_2})=q^{-(\mu_1,\mu_2)}c_\mu\,v_\mu, \;\;
q^{-(\mu_1,\mu_2)}c_\mu\in\K^\times$, and $(\nu_k,-\nu_k)=-2$, the vectors
$T(v_{\nu_k}\ot v_{-\nu_k})=q^{2}\,T_-(v_{\nu_k}\ot v_{-\nu_k})$, $k=1,2,\dots,m$, again form a basis of $V_0$.

\subsection{The braiding twist}\label{ss:braid}
For the course of the proof of the main result, we need to twist the morphism $T$ using the braid generator or braiding  $\Rhat$ of type-1 $\Uq$-modules. Let us quickly recall the construction of $\Rhat$ first. Let $\Theta=\sum_{\eta\in\Qp}\Theta_\eta$ be the quasi-$R$-matrix
\cite[Ch.~7]{Jantzen}, normalised by $\Theta_0=1\ot1$, with
$\Theta_\eta\in U^-_{-\eta}\ot U^+_{\eta}$. Write $f(v\ot w)=q^{-(\wt v,\wt
w)}v\ot w$ and $\tau(v\ot w)=w\ot v$. Then, $\Rhat=\Theta\circ f\circ\tau\colon V\ot_+ V\to V\ot_+ V$ is
a $\Uq$-module isomorphism for \eqref{eq:hopf}. For any two weight vectors $v$ and $w$,
\begin{equation}\label{eq:Rform}
\Rhat(v\ot w)=q^{-(\wt v,\wt w)}\,w\ot v
+\sum_{\eta>0}(u^-_{-\eta} w)\ot(u^+_\eta v), \;\;
\wt(u^-_{-\eta} w)<\wt w,\ \ \wt(u^+_\eta v)>\wt v.
\end{equation}
$\Rhat$ is an isomorphism and $[V\ot V:V]=1$, there is a scalar
$d\in\K^{\times}$ with
\begin{equation}\label{eq:TR}
T\circ\Rhat=d\, T.
\end{equation}
Recall from \S\ref{ss:threetypes} that $\alpha_i$, with $i=1,4,8$ in types $G_2$,
$F_4$, $E_8$ respectively, is the short simple root for which $\vp=\vp_i$. Thus
$z_{\alpha_i}$ is a zero-weight vector of $V$ and $b_{i,0}=\overline{z_{\alpha_i}}$.

\begin{proposition}\label{prop:d}
$d\neq1$.
\end{proposition}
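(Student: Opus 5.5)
The plan is to identify $d$ as the scalar by which the braiding $\Rhat$ acts on the unique copy of $V$ inside $V\ot_+V$, and to compute that scalar through the standard ribbon/Casimir identity for the square of the braiding. By \eqref{eq:TR} and $[V\ot V:V]=1$, $\Rhat$ preserves the isotypic component $V(\vp)\subseteq V\ot_+V$ (it is the kernel-complement determined by $T$) and acts on it by the scalar $d$. Hence $\Rhat^2$ acts on this component by $d^2$, and it suffices to show $d^2\neq 1$.

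\textbf{Step 1: formula for $\Rhat^2$.} I would use the Drinfeld identity $\Rhat_{V,V}\circ\Rhat_{V,V}=\Delta(\mathbf v)^{-1}(\mathbf v\ot\mathbf v)$ on $V\ot_+V$. Here $\mathbf v$ is the ribbon (quantum Casimir) element, which acts on $V(\lambda)$ by the scalar $q^{-(\lambda,\lambda+2\rho)}$ (up to the sign and normalisation conventions fixed by $\Theta$ and $f$ in \cite[Ch.~7]{Jantzen}). Applied to the summand $V(\vp)\subseteq V(\vp)\ot V(\vp)$, this gives
\[
d^2=q^{\pm\left(2(\vp,\vp+2\rho)-(\vp,\vp+2\rho)\right)}=q^{\pm(\vp,\vp+2\rho)}.
\]

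\textbf{Step 2: conclude.} Since $\vp$ is dominant and nonzero, $(\vp,\vp+2\rho)\ge(\vp,\vp)=2>0$. With $q$ transcendental, $q^{\pm(\vp,\vp+2\rho)}\neq1$ in $\K$, so $d^2\neq 1$ and therefore $d\neq1$.

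The main obstacle is Step 1: pinning down the exact form of the identity for \emph{this} normalisation $\Rhat=\Theta\circ f\circ\tau$ with the coproduct \eqref{eq:hopf}. Since only the conclusion $d^2\in q^{\mathbb Z\setminus\{0\}}$ is needed, I would avoid the abstract identity and verify it directly. Concretely, $\Theta$ is unipotent with respect to the weight filtration, as shown in \eqref{eq:Rform}. So $\Rhat^2$ acts on the highest weight vector $h$ of the $V(\vp)$-summand, of weight $\vp$, as $\Theta\,f\tau\,\Theta\, f\tau$. The scalar can then be read off from the action of the Drinfeld element $u=\sum S(b_k)a_k\,K_{-2\rho}$-type expression on highest weight vectors, which reduces to the computation $q^{-(\lambda,\lambda+2\rho)}$ on $v_\lambda$, since the non-leading terms of $\Theta$ kill highest weight vectors.

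One might instead specialise at $q=1$, where $d(1)$ is the eigenvalue of the flip on $V\subseteq V\ot V$. This gives $-1$ for $E_8$ (the bracket) and for $G_2$ (the cross product). But it gives $+1$ for $F_4$, since the copy of $V(\vp_4)$ inside $S^2$ of the $26$-dimensional module comes from the symmetric Jordan product. That is why the genuinely quantum $q$-power argument above is required, and I would not rely on specialisation.
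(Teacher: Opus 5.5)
Your argument is correct, but it follows the route of Remark~\ref{rem:d} rather than the paper's proof of this proposition.

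\textbf{The paper's route.} The paper argues inside its crystal framework. Since $v_\vp$ is a highest weight vector and $\wt z_{\alpha_i}=0$, \eqref{eq:Rform} gives exactly $\Rhat(v_\vp\ot z_{\alpha_i})=z_{\alpha_i}\ot v_\vp$. Then two facts are compared:
\begin{itemize}
\item[(a)] $T(v_\vp\ot z_{\alpha_i})=\kappa v_\vp$ with $\kappa\in 1+qA_0$, because $b_\vp\ot b_{i,0}$ is the highest weight node of $C$;
\item[(b)] $T(z_{\alpha_i}\ot v_\vp)=T_-(z_{\alpha_i}\ot v_\vp)\in qL(\vp)$, because $b_{i,0}\ot b_\vp\notin C$.
\end{itemize}
Plugging these into \eqref{eq:TR} gives $d\kappa\in qA_0$, hence $d\in qA_0$. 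This is slightly more than $d\neq 1$.

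\textbf{Comparison.} The paper's proof is self-contained given Proposition~\ref{prop:twosided}. It needs no Drinfeld or ribbon identity and no bookkeeping of normalizations. Your proof needs that external input, but it returns the exact value $d^2=q^{\pm(\vp,\vp+2\rho)}$. This makes the $q\to1$ obstruction of Remark~\ref{obs} quantitative, and your last paragraph on the flip eigenvalues agrees with that remark. The two approaches also fit together: the crystal conclusion $d\in qA_0$ resolves your sign ambiguity to $d^2=q^{(\vp,\vp+2\rho)}$, matching Remark~\ref{rem:d}.

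\textbf{A caution on Step~1.} Your proposed ``direct verification'' does not actually avoid the abstract identity. The highest weight vector of the $V(\vp)$-summand is not a pure tensor, so the higher terms of $\Theta$ do not vanish on it. The leading-term trick only computes the scalar by which the ribbon (or Drinfeld) element acts on each $V(\lambda)$. So the identity $\Rhat^2=\Delta(\mathbf v)^{-1}(\mathbf v\ot\mathbf v)$, in the normalization $\Rhat=\Theta\circ f\circ\tau$, should simply be cited (e.g.\ \cite{KS}). That is perfectly acceptable, but it should be presented as a citation rather than as a verification.
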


\begin{proof}
Set $Z:=v_\vp\ot z_{\alpha_i}\in V\ot_+ V$. Assume the contrary, $d=1$. Then $T(Z)=T(\Rhat Z)$ by \eqref{eq:TR}, and we evaluate both sides using $T=T_-\circ\psi^{-1}$.

As $\wt z_{\alpha_i}=0$, $\psi^{-1}(Z)=Z$, so $T(Z)=T_-(Z)$. The image of $Z$ in $C$ is the highest weight node $x=b_\vp\ot b_{i,0}$. Hence $T_-(Z)\equiv v_\vp\pmod{qL(\vp)}$, and using $\dim V_\vp=1$, this congruence can be lifted to the equality,
\[
T(Z)=\kappa\,v_\vp,\qquad \kappa\in1+qA_0,\qquad T(Z)\not\equiv0\pmod{qL(\vp)}.
\]
It again follows from (\ref{eq:Rform}) that
%$\Rhat(Z)=q^{-(\vp,0)}z_{\alpha_i}\ot v_\vp+\sum_{\eta>0}(u^-_{-\eta}
%z_{\alpha_i})\ot(u^+_\eta v_\vp)$. The scalar equals $1$, and each correction vanishes since
%$u^+_\eta v_\vp=0$ for $\eta>0$, $v_\vp$ being highest. 
$\Rhat(Z)=z_{\alpha_i}\ot
v_\vp$. So, we must have $T(\Rhat Z)=T_-(z_{\alpha_i}\ot v_\vp)$.

The equality $T(Z)=T(\Rhat Z)$ now reads $T_-(v_\vp\ot z_{\alpha_i})=T_-(z_{\alpha_i}\ot v_\vp)$, so
$T_-(z_{\alpha_i}\ot v_\vp)\not\equiv0\pmod{qL(\vp)}$. But, the image of $z_{\alpha_i}\ot v_\vp$ in $C$ is $b_{i,0}\ot b_\vp$, we obtain
$\overline{T_-}(b_{i,0}\ot b_\vp)\neq0$. Therefore, $b_{i,0}\ot b_\vp \in C$. But $C$ contains exactly one node of weight $\vp$, that is $x=b_\vp\ot b_{i,0}$, and $b_{i,0}\ot b_\vp\neq x$. This contradicts $b_{i,0}\ot b_\vp\in C$. We conclude that $d\neq1$.
\end{proof}

\begin{remark}\label{rem:d}
In practice, $d$ can be computed explicitly. As $\Rhat^{\,2}$ is a $\Uq$-endomorphism
of $V\ot_+V$, it acts on each irreducible component by a scalar known as the monodromy eigenvalue. On the multiplicity free component of $V\ot_+ V$ isomorphic to $V$, this eigenvalue is exactly $d^2$, and it is of the form $d^{\,2}=q^{(\vp,\,\vp+2\rho)}$. Since $(\vp,\vp+2\rho)>0$, this gives $d^{\,2}\neq1$. For more details, we
refer to chapter~8 of \cite[Corollary 23]{KS} and \cite{Gould}. 
\end{remark}

The following twisted version of $T$ will be used in the next section.

\begin{corollary}\label{cor:extract}
\leavevmode
\begin{enumerate}
\item[\rm(i)] For $\mu\in W\vp$ with $\mu\neq\pm\vp$ and $\mu_1,\mu_2$ as in
Theorem~\ref{thm:Tminus}\,{\rm(i)},
\[
T\big(v_{\mu_1}\ot v_{\mu_2}-\Rhat(v_{\mu_1}\ot v_{\mu_2})\big)
=(1-d)\,T(v_{\mu_1}\ot v_{\mu_2})
=(1-d)\,q^{-(\mu_1,\mu_2)}c_\mu\,v_\mu
\]
is a nonzero scalar multiple of $v_\mu$.
\item[\rm(ii)] For $\nu_1,\dots,\nu_m$ as in Theorem~\ref{thm:Tminus}\,{\rm(ii)},
the vectors
\[
T\big(v_{\nu_k}\ot v_{-\nu_k}-\Rhat(v_{\nu_k}\ot v_{-\nu_k})\big)
=(1-d)\,T(v_{\nu_k}\ot v_{-\nu_k}),\qquad k=1,2,\dots,m,
\]
form a basis of $V_0$.
\end{enumerate}
\end{corollary}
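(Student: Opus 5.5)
The argument is a direct combination of the identity \eqref{eq:TR} with Theorem~\ref{thm:Tminus} and Proposition~\ref{prop:d}; no new crystal combinatorics is needed.

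First, apply $T$ to $u-\Rhat(u)$ for an arbitrary $u\in V\ot_+V$. By linearity and \eqref{eq:TR},
\[
T\big(u-\Rhat(u)\big)=T(u)-(T\circ\Rhat)(u)=T(u)-d\,T(u)=(1-d)\,T(u).
\]
This gives the first equality in both (i) and (ii), with $u=v_{\mu_1}\ot v_{\mu_2}$ and $u=v_{\nu_k}\ot v_{-\nu_k}$ respectively.

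For (i), substitute the formula recorded after Theorem~\ref{thm:Tminus}, namely $T(v_{\mu_1}\ot v_{\mu_2})=q^{-(\mu_1,\mu_2)}c_\mu v_\mu$, where $c_\mu\in A_0^\times\subseteq\K^\times$. This yields the displayed expression. For it to be a nonzero multiple of $v_\mu$, the three factors must be nonzero. The factor $q^{-(\mu_1,\mu_2)}$ is nonzero because $(\mu_1,\mu_2)\in\mathbb Z$. The factor $c_\mu$ is nonzero by Theorem~\ref{thm:Tminus}(i). The factor $1-d$ is nonzero by Proposition~\ref{prop:d}.

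For (ii), recall from the paragraph after Theorem~\ref{thm:Tminus} that the vectors $T(v_{\nu_k}\ot v_{-\nu_k})=q^2T_-(v_{\nu_k}\ot v_{-\nu_k})$ form a basis of $V_0$. Multiplying every vector of a basis by the common scalar $1-d$, which is nonzero by Proposition~\ref{prop:d}, again gives a basis.

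The only substantive input is therefore $d\neq1$, which is exactly Proposition~\ref{prop:d}; everything else is bookkeeping. One minor point should be stated explicitly. $\Rhat$ is defined on $V\ot_+V$, which as a vector space is $V\ot V$, so $\Rhat(v_{\mu_1}\ot v_{\mu_2})$ is a well-defined element of the domain of $T$. The other weight components produced by the sum in \eqref{eq:Rform} cause no difficulty, because we never expand $\Rhat(u)$: we only use the global identity \eqref{eq:TR}.
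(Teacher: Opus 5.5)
Your proof is correct and is exactly the argument the paper leaves implicit: \eqref{eq:TR} gives $T(u-\Rhat u)=(1-d)T(u)$, Theorem~\ref{thm:Tminus} together with the paragraph following it supplies $T(v_{\mu_1}\ot v_{\mu_2})=q^{-(\mu_1,\mu_2)}c_\mu v_\mu$ and the basis $\{T(v_{\nu_k}\ot v_{-\nu_k})\}_k$ of $V_0$, and Proposition~\ref{prop:d} makes the common factor $1-d$ nonzero. One cosmetic point: $q^{-(\mu_1,\mu_2)}\neq0$ simply because $q$ is a unit of $\K$, and the integrality of $(\mu_1,\mu_2)$ only guarantees that this power lies in $\K$ rather than in an extension.
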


\section{Generation via generalized quantum minors}\label{minors}

Put $I_s:=\{k\in I\mid\alpha_k\in\Pis\}$, so that $|I_s|=m$, and write
$z_k:=z_{\alpha_k}$ for $k\in I_s$. Let
$\{\,v_\mu^*\mid\mu\in\Phis\,\}\sqcup\{\,z_k^*\mid k\in I_s\,\}$
be the basis of $V^*$ dual to the weight basis \eqref{eq:qmbasis} of $V$. Therefore,
$v_\mu^*\in(V^*)_{-\mu}$ and $z_k^*\in(V^*)_0$. Set
\begin{gather*}
\mathscr{T}_{\neq0}:=\{\,c^V(v_\mu^*,v_{\mu'})\mid\mu,\mu'\in\Phis\,\},\\
\mathscr{T}_0:=\{\,c^V(z_k^*,v_\mu),\ c^V(v_\mu^*,z_k),\ c^V(z_k^*,z_l)
\mid\mu\in\Phis,\ k,l\in I_s\,\}.
\end{gather*}
Observe that $\mathscr{T}_{\neq0}=\{\,\Delta_{\mu,\mu'}\mid\mu,\mu'\in\Phis\,\}$
is the set of generalized quantum minors of $V$. Let $\mathscr{T}''\subseteq \mathcal{O}_q(G)$ be the
$\K$-subalgebra generated by $\mathscr{T}_{\neq0}$. Fix $r\in I_s$. Since $z_r^*$ has weight $0$, $T^*(z_r^*)$ must be of the form
\begin{equation}\label{eq:dagger}
T^*(z_r^*)=\underbrace{\sum_{k,l\in I_s} a^{(r)}_{kl}\,z_k^*\ot z_l^*}_{=:A_r}
\;+\;\underbrace{\sum_{\mu\in\Phis} n^{(r)}_{\mu}\,v_{\mu}^*\ot v_{-\mu}^*}_{=:N_r},
\qquad a^{(r)}_{kl},\,n^{(r)}_{\mu}\in\K .
\end{equation}

\begin{lemma}[{\rm cf.}\ {\rm\cite[Lemma~5.11]{OQY}}]\label{lem:sym}
The matrix $A^{(r)}=(a^{(r)}_{kl})_{k,l\in I_s}$ is symmetric for every $r\in I_s$.
\end{lemma}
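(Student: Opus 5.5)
The plan is to use the multiplicity-one property $[V\ot V:V]=1$ twice. First we build a second $\Uq$-morphism $V\ot_+V\to V$ that swaps the two tensor factors, so it must be a scalar multiple of $T$. Then we pin that scalar down to $1$ by a crystal-limit computation, in the same spirit as Proposition~\ref{prop:d}.

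\emph{Reduction.} Under the identification $V^*\ot V^*\cong (V\ot V)^*$ of \S\ref{ss:oq}, we have
\[
a^{(r)}_{kl}=\langle T^*(z_r^*),\,z_l\ot z_k\rangle=\langle z_r^*,\,T(z_l\ot z_k)\rangle .
\]
The exact order of $k,l$ is irrelevant for symmetry. The terms in $N_r$ pair to zero with $z_l\ot z_k$. Hence it suffices to show $T(z_k\ot z_l)=T(z_l\ot z_k)$ for all $k,l\in I_s$.

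\emph{Building the swapped morphism.} Let $\omega$ be the $\K$-algebra automorphism of $\Uq$ with $E_j\mapsto F_j$, $F_j\mapsto E_j$, $K_j\mapsto K_j^{-1}$. A direct check on generators using \eqref{eq:hopf} gives $\Delta\circ\omega=(\omega\ot\omega)\circ\Delta^{\mathrm{op}}$; for instance both sides send $E_j$ to $F_j\ot K_j^{-1}+1\ot F_j$. Define $J\colon V\to V$ by $J v_\mu=v_{-\mu}$ and $J z_k=z_k$. Inspecting (Q1)--(Q5), which are visibly symmetric under $E\leftrightarrow F$, $\mu\leftrightarrow-\mu$, shows that $J(xu)=\omega(x)J(u)$ for all $x\in\Uq$, $u\in V$. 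Put $\Sigma:=\tau\circ(J\ot J)$ on $V\ot_+V$. The coalgebra anti-property of $\omega$ gives $\Sigma(x\cdot a)=\omega(x)\cdot\Sigma(a)$. Consequently $T':=J^{-1}\circ T\circ\Sigma$ is a $\Uq$-morphism $V\ot_+V\to V$. Since $[V\ot V:V]=1$, we get $T'=e\,T$ for some $e\in\K$.

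Evaluating $T'$ on $z_k\ot z_l$ gives $T(z_l\ot z_k)=e\,T(z_k\ot z_l)$ on $V_0$, where $J$ is the identity. So the lemma reduces to showing $e=1$. Since $J^2=\mathrm{id}$ and $\Sigma^2=\mathrm{id}$, we also get $e^2=1$, though we will not need this.

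\emph{Main step: $e=1$.} I expect this to be the main obstacle, and I would handle it as in Proposition~\ref{prop:d}. Evaluate both sides on $Z=v_\vp\ot z_{\alpha_i}$. On one side, $T(Z)=T_-(Z)=\kappa v_\vp$ with $\kappa\in1+qA_0$, because $Z$ reduces to the highest weight node $x$ of $C$. On the other side, $\Sigma Z=z_{\alpha_i}\ot v_{-\vp}$, on which $\psi$ acts trivially since one factor has weight zero. This vector reduces to $x_{\mathrm{low}}=b_{i,0}\ot b_{-\vp}\in C$ by Proposition~\ref{prop:twosided}. By Theorem~\ref{thm:Tminus} and $\dim V_{-\vp}=1$, we get $T(\Sigma Z)=\kappa' v_{-\vp}$ with $\kappa'\in1+qA_0$. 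Then $J^{-1}T(\Sigma Z)=\kappa' v_\vp$, so
\[
e\,\kappa=\kappa', \qquad\text{hence}\qquad e\in 1+qA_0 .
\]
Combined with $e^2=1$, or simply by reading off constant terms, this forces $e=1$, and the lemma follows.

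The only delicate points are the two bookkeeping checks. The first is that $J$ really intertwines via $\omega$: the $[2]_{q_j}$ entries in (Q4) must match, and they do because $[2]_{q_j}$ is symmetric. The second is that no stray power of $q$ enters from $\psi$. This holds because every vector we evaluate on has one factor of weight zero.
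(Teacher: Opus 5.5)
Your argument is correct, and it reaches the lemma by a genuinely different route from the paper.

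\emph{The paper's route.} It argues type by type:
\begin{itemize}
\item For $G_2$ the claim is trivial, since $A^{(r)}$ is $1\times 1$.
\item For $E_8$ it is quoted from \cite[Lemma~5.11]{OQY}, because $V$ is the quantum adjoint module.
\item For $F_4$, $T$ is restricted to the $U_q(\mathfrak{sl}_3)$-submodule $V'$ spanned by $z_3,z_4$ and the $v_\mu$ with $\mu\in\Phi(A_2)$. This submodule has the same zero-weight space, and the same lemma is quoted again.
\end{itemize}

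\emph{Your route.} You give one argument for all three types:
\begin{itemize}
\item $\omega$ is an algebra automorphism and a coalgebra anti-automorphism, and $J$ intertwines it. Together these make $J^{-1}\circ T\circ\Sigma$ a morphism $V\ot_+V\to V$.
\item Multiplicity one gives $J^{-1}T\Sigma=eT$.
\item Involutivity of $J$ and $\Sigma$ gives $e^2=1$.
\item Evaluate at $v_\vp\ot z_{\alpha_i}$, which $\Sigma$ sends to a lift of $x_{\mathrm{low}}=b_{i,0}\ot b_{-\vp}$. The crystal limit then gives $e=\kappa'/\kappa\in 1+qA_0$, hence $e=1$.
\end{itemize}
This uses only ingredients already in the paper: Proposition~\ref{prop:twosided}, Theorem~\ref{thm:Tminus}, $[V\ot V:V]=1$, and the visible $E\leftrightarrow F$, $\mu\leftrightarrow-\mu$ symmetry of (Q1)--(Q5). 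It also yields the stronger identity $T\circ\Sigma=J\circ T$ on all of $V\ot V$.

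Your route also avoids a step the paper needs: knowing that \cite[Lemma~5.11]{OQY} applies to the particular map $T|_{V'}$. That is not automatic. The adjoint $U_q(\mathfrak{sl}_3)$-module occurs twice in its own tensor square, so $T|_{V'}$ is not determined by $V'$ alone. The paper's version is shorter on the page, but only because it outsources the content to \cite{OQY}.

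\emph{One correction.} You say you will not need $e^2=1$, and later offer ``reading off constant terms'' as an alternative. That alternative does not work. Knowing $e\in 1+qA_0$ only fixes the value of $e$ at $q=0$; it does not rule out, say, $e=1+q$. What forces $e=1$ is the combination: $e^2=1$ gives $e=\pm1$, and $-1\notin 1+qA_0$. You did derive $e^2=1$ correctly, from $J^2=\mathrm{id}$, $\Sigma^2=\mathrm{id}$ and $T\neq 0$. So the proof stands once you keep that step and drop the other alternative.
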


\begin{proof}
For $G_2$, the matrix $A^{(r)}$ is a scalar, and the claim is trivial. For $\g=E_8$, $V=V(\vp)$
is the quantum adjoint module. Therefore, \cite[Lemma~5.11]{OQY} gives the symmetry of $A^{(r)}$.

For $F_4$, we have $\Pis=\{\alpha_3,\alpha_4\}$, so $I_s=\{3,4\}$. These two
short simple roots span a subsystem $\Phi(A_2)=\{\pm\alpha_3,\pm\alpha_4,
\pm\theta_{A_2}\}$ of type $A_2$, with $\theta_{A_2}:=\alpha_3+\alpha_4$. The generators $E_j,F_j,K_j^{\pm1}$ $(j\in\{3,4\})$ generate a subalgebra isomorphic to 
$U_q(\mathfrak{sl}_3)\subseteq\Uq$. Write $V':=V_0\oplus\bigoplus_{\mu\in\Phi(A_2)}V_\mu= \text{Span}_{\mathbb{K}}\{z_3, z_4\}\cup \{v_{\mu}\mid \mu \in \Phi(A_2)\}$. Using \emph{(Q1)}--\emph{(Q5)}, it follows that $V'$
is a $U_q(\mathfrak{sl}_3)$-submodule of $V$. It is isomorphic to the quantum adjoint module
$V_q(\theta_{A_2})$ with zero-weight space $V'_0=V_0$, and the set $\{z_3, z_4\}\cup \{v_{\mu}\mid \mu \in \Phi(A_2)\}$ is the canonical basis $B(\theta_{A_2})$. Now, $v_{\theta_{A_2}}$ and $v_{-\theta_{A_2}}$ are the highest and  lowest weight vectors
of $V'$, so $v_{\theta_{A_2}}\ot v_{-\theta_{A_2}}$ is a cyclic vector of the
$U_q(\mathfrak{sl}_3)$-module $V'\ot V'$. As $T$ preserves weights, it sends this cyclic
vector into $V_0= V'_0\subseteq V'$, and hence
$T(V'\ot V')=U_q(\mathfrak{sl}_3)\,T(v_{\theta_{A_2}}\ot v_{-\theta_{A_2}})\subseteq V'$. Thus $T$ restricts to a $U_q(\mathfrak{sl}_3)$-module homomorphism
$T|_{V'}\colon V'\ot V'\to V'$. By \eqref{eq:dagger} the matrix $A^{(r)}$ is the
$V_0^*\ot V_0^*$-component of $T^*(z_r^*)$, so it is the matrix attached to the pair
$\big(V',\;T|_{V'}\big)$. Applying \cite[Lemma~5.11]{OQY} again, we obtain $A^{(r)}$ is symmetric.
\end{proof}

\begin{lemma}[{\rm cf.}\ {\rm\cite[Theorem~5.9]{OQY}}]\label{lem:key}
Let $v,w\in V$ be weight vectors with $\wt v>0>\wt w$. Then 
$c^{V\ot V}\big(T^*(z_r^*),\ v\ot w-\Rhat(v\ot w)\big)\in\mathscr{T}'',
\;\; r\in I_s.$
\end{lemma}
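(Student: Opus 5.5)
We split $T^*(z_r^*)=A_r+N_r$ as in \eqref{eq:dagger} and treat the two pieces separately. Set $X:=v\ot w-\Rhat(v\ot w)$, where $v,w$ are weight vectors with $\wt v>0>\wt w$.

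\textbf{The $N_r$ part.} By \eqref{eq:mult}, each summand $v_\mu^*\ot v_{-\mu}^*$ paired with a pure tensor $u_1\ot u_2$ gives the product $c^V(v_{-\mu}^*,u_1)\,c^V(v_\mu^*,u_2)$. So we expand $X$ in the weight basis \eqref{eq:qmbasis} of $V\ot V$. Every resulting term is a product of two matrix coefficients whose functionals $v_{\pm\mu}^*$ have non-zero weight. The vectors, however, may be zero-weight vectors $z_k$. To handle this we will instead pair directly with pure tensors $v_{\mu_1}\ot v_{\mu_2}$ appearing in $X$. The conclusion we aim for is that $c^{V\ot V}(N_r,X)$ is a linear combination of products $c^V(v^*_{-\mu},u_1)c^V(v^*_\mu,u_2)$ in which $u_1,u_2$ range over the components of $X$.

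\textbf{Weight control of $X$.} By \eqref{eq:Rform},
\[
\Rhat(v\ot w)=q^{-(\wt v,\wt w)}w\ot v+\sum_{\eta>0}(u^-_{-\eta}w)\ot(u^+_\eta v),
\]
where $\wt(u^-_{-\eta}w)<\wt w<0$ and $\wt(u^+_\eta v)>\wt v>0$. Hence every tensor component appearing in $X$ is of the form $a\ot b$ with $\wt a\neq0$ and $\wt b\neq0$. The first factor is $v$, $w$, or $u^-_{-\eta}w$, each of non-zero weight; the second is $w$, $v$, or $u^+_\eta v$, likewise of non-zero weight. This holds because the $\Qp$-grading keeps strictly negative weights negative and strictly positive weights positive. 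Therefore $X$ lies in the span of $v_{\mu}\ot v_{\mu'}$ with $\mu,\mu'\in\Phis$, and no $z_k$ occurs. Consequently $c^{V\ot V}(N_r,X)$ is a combination of products of elements of $\mathscr{T}_{\neq0}$, so it lies in $\mathscr{T}''$.

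\textbf{The $A_r$ part.} For the same reason, pairing $z_k^*\ot z_l^*$ with $X$ gives $c^V(z_l^*,a)\,c^V(z_k^*,b)$ with $a,b$ of non-zero weight. These coefficients lie in $\mathscr{T}_0$, not in $\mathscr{T}_{\neq0}$, so they must cancel. We will use Lemma~\ref{lem:sym} together with the structure of $X$ as $Y-\Rhat Y$. The idea is that $\Rhat$ essentially swaps the factors, so the $A_r$-contribution from $v\ot w$ and the one from its swapped leading term $q^{-(\wt v,\wt w)}w\ot v$ should combine via symmetry of $A^{(r)}$. Concretely, we will rewrite $c^{V\ot V}(A_r,\Rhat(v\ot w))$ as $c^{V\ot V}(\Rhat^*A_r,v\ot w)$, using that the $\Rhat$-twisted coproduct identity expresses $c^{V\ot V}(\xi,\Rhat y)$ as the opposite-ordered product of matrix coefficients. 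This is the mechanism of \cite[Theorem~5.9]{OQY}: in $\mathcal O_q(G)$ one has the commutation relation $c(\xi_1,u_1)c(\xi_2,u_2)$ versus $c^{V\ot V}(\cdots,\Rhat(\cdots))$. Combined with $a_{kl}=a_{lk}$, this yields
\[
\sum_{k,l}a_{kl}\,c(z_l^*,v)\,c(z_k^*,w)-\sum_{k,l}a_{kl}\,c(z_k^*,v)\,c(z_l^*,w)=0
\]
modulo terms whose vectors and functionals both have non-zero weight.

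The main obstacle is exactly this cancellation. It requires carefully tracking the $\Theta$-correction terms of $\Rhat$ acting on the functional side, and checking that all such corrections land in $\mathscr{T}''$. We would follow the OQY argument verbatim, since it only uses symmetry of $A^{(r)}$ and the form \eqref{eq:Rform}, both of which are available here for all three types.
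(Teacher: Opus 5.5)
Your proposal follows the paper's argument: the same split $T^*(z_r^*)=A_r+N_r$, the same weight control of $v\ot w-\Rhat(v\ot w)$ via \eqref{eq:Rform} to handle $N_r$, and the same transfer of $\Rhat$ to the functional side, where the $\eta=0$ term cancels by Lemma~\ref{lem:sym}. The step you defer to OQY is immediate, and it is why one must work on the functional side rather than pair $A_r$ directly with the swapped vector $w\ot v$: the identity $c^{V\ot V}(\xi,\Rhat y)=c^{V\ot V}(\Rhat^{t}\xi,y)$ holds just because $\Rhat$ commutes with $\Delta(x)$ for $x\in\Uq$ (no commutation relations in $\mathcal{O}_q(G)$ are needed), and for $\eta>0$ the component $\Theta_\eta$ moves both zero-weight slots of $A_r$ to weights $\pm\eta\neq0$, so these terms pair with $v\ot w$ into products of elements of $\mathscr{T}_{\neq0}$.
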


\begin{proof}
Write $\zeta:=v\ot w-\Rhat(v\ot w)$ and split $T^*(z_r^*)=A_r+N_r$ as in
\eqref{eq:dagger}.

(i) By \eqref{eq:Rform}, we have $\wt(u^-_{-\eta}w)<\wt w<0$ and $\wt(u^+_\eta v)>\wt v>0$. Hence, in $v\ot w$ and in every summand of $\Rhat(v\ot w)$ both tensor slots carry non-zero weight.

(ii) Each summand of $N_r$ has both slots of non-zero weight, so using
\eqref{eq:mult} and (i), $c^{V\ot V}(N_r, \zeta)$ can be written a sum of products of
elements of $\mathscr{T}''$.

(iii) For $X\in\Uq$, using $\Rhat\,\Delta(X)=\Delta(X)\Rhat$ and transporting
$\Rhat=\Theta\circ f\circ\tau$ to the dual side,
\[
\big\langle A_r,\ \Delta(X)\Rhat(v\ot w)\big\rangle
=\sum_{\eta\ge0}q^{(\eta,\eta)}\big\langle
\tau^*\big((S^{-1}\ot S^{-1})(\Theta_\eta^{21})A_r\big),\ \Delta(X)(v\ot w)\big\rangle.
\]
Notice that the flip $^{21}$ appears in the above equation because we are using the opposite convention for the pairing in (\ref{eq:mult}). By definition, $\Theta_0=1\ot1$, and Lemma~\ref{lem:sym} gives $\tau^*A_r=\sum_{k,l\in I_s}a^{(r)}_{lk}z_k^*\ot z_l^*=A_r$, the $\eta=0$ term equals
$\big\langle A_r,\Delta(X)(v\ot w)\big\rangle$ so it cancels against the leading term.
Therefore
\[
c^{V\ot V}(A_r,\zeta)=-\sum_{\eta>0}q^{(\eta,\eta)}\,
c^{V\ot V}\big(\tau^*\big((S^{-1}\ot S^{-1})(\Theta_\eta^{21})A_r\big),\ v\ot w\big)
\ \in\ \mathscr{T}''.
\]
Combining the steps (ii) and (iii), the proof follows.
\end{proof}

\subsection*{The main results}
\begin{theorem}\label{thm:main}
Let $\g$ be of type $G_2$, $F_4$ or $E_8$ and $V=V(\vp)$ the quasi-minuscule module.
Then $\mathscr{T}_0\subseteq\mathscr{T}''$.
\end{theorem}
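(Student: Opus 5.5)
We show three families lie in $\mathscr{T}''$, in the order $c^V(z_r^*,v_\mu)$, then $c^V(v_\mu^*,z_k)$, then $c^V(z_k^*,z_l)$. The common mechanism is to pair a dual vector of $V$ through $T^*$ against the twisted vectors $\zeta=v\ot w-\Rhat(v\ot w)$ of Corollary~\ref{cor:extract}. Since $T$ is a $\Uq$-morphism, for any $\xi\in V^*$ and $u\in V\ot V$ we have
\[
c^V(\xi,T(u))=c^{V\ot V}(T^*\xi,u).
\]

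\emph{Zero-weight functionals.} Fix $r\in I_s$ and a short root $\mu$.
\begin{itemize}
\item If $\mu\neq\pm\vp$, take $\mu_1>0>\mu_2$ from Theorem~\ref{thm:Tminus}(i) and $\zeta=v_{\mu_1}\ot v_{\mu_2}-\Rhat(v_{\mu_1}\ot v_{\mu_2})$. Corollary~\ref{cor:extract}(i) gives $T(\zeta)=\gamma v_\mu$ with $\gamma\in\K^\times$. Hence $c^V(z_r^*,v_\mu)=\gamma^{-1}c^{V\ot V}(T^*z_r^*,\zeta)$, which lies in $\mathscr{T}''$ by Lemma~\ref{lem:key}.
\item If $\mu=\pm\vp$, the weight $\mu$ is extremal and Lemma~\ref{lem:key} needs both slots of nonzero weight. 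So write $v_{\pm\vp}$ as $E_jv_{\mu'}$ or $F_jv_{\mu'}$ with $\mu'=\pm\vp\mp\alpha_i$ a short root different from $\pm\vp$; this uses (Q2), since $\langle\vp,\alpha_j^\vee\rangle=\delta_{ji}$. Then use the coproduct identity $c^V(\xi,Xu)(Y)=c^V(\xi,u)(YX)$. This does not directly express the coefficient as a product, so it is cleaner to apply $E_j$ to $\zeta$ itself: $E_j\zeta$ is again a combination of vectors $v\ot w-\Rhat(v\ot w)$ with $\wt v>0>\wt w$, because $\Rhat$ commutes with $\Delta(E_j)$ and each slot of $\Delta(E_j)(v\ot w)$ stays of nonzero weight except in degenerate cases.
\item The fallback if this fails: $c^V(z_r^*,v_{\pm\vp})$ need only handle extremal weights. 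Here a direct argument works, since $T(v_{\vp}\ot v_{\pm\vp-\vp})$-type vectors can be used; or one uses $x$ and $x_{\rm low}$ directly, noting $T(v_\vp\ot z_{\alpha_i})=\kappa v_\vp$. I expect this boundary case to be the main obstacle.
\end{itemize}

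\emph{Zero-weight vectors.} By Corollary~\ref{cor:extract}(ii), each $z_k$ is a $\K$-combination of $T(\zeta_l)$ with $\zeta_l=v_{\nu_l}\ot v_{-\nu_l}-\Rhat(v_{\nu_l}\ot v_{-\nu_l})$, $\nu_l>0$. Thus $c^V(\xi,z_k)$ is a combination of $c^{V\ot V}(T^*\xi,\zeta_l)$.
\begin{itemize}
\item For $\xi=z_r^*$, Lemma~\ref{lem:key} applies directly, which settles $c^V(z_k^*,z_l)$.
\item For $\xi=v_\mu^*$ with $\mu\neq0$, decompose $T^*(v_\mu^*)$ into weight components. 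Every term $\xi_1\ot\xi_2$ has both slots of nonzero weight, except terms involving $z_k^*\ot v^*$ or $v^*\ot z_k^*$. By \eqref{eq:mult} and step (i) of the proof of Lemma~\ref{lem:key}, those terms pair with $\zeta_l$ into products of an element of $\mathscr{T}_{\neq0}$ with a coefficient $c^V(z_k^*,v_{\mu'})$, which the first step already put in $\mathscr{T}''$.
\end{itemize}
Hence $c^V(v_\mu^*,z_k)\in\mathscr{T}''$. Together with the previous step this gives $\mathscr{T}_0\subseteq\mathscr{T}''$.

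The ordering is essential: functionals $z_r^*$ against nonzero-weight vectors come first, then zero-weight vectors with nonzero-weight functionals (which feed on the first step), and $c^V(z_k^*,z_l)$ last or directly via Lemma~\ref{lem:key}. The delicate point is handling the extremal weights $\pm\vp$, which are excluded from Corollary~\ref{cor:extract}(i).
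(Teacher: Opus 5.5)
Your treatment of $c^V(z_r^*,v_\mu)$ for $\mu\neq\pm\vp$, of $c^V(z_k^*,z_l)$, and of $c^V(v_\mu^*,z_k)$ follows the paper's Steps~1, 3 and 4. \textbf{The gap} is the boundary case $\mu=\pm\vp$. You flag it as the main obstacle but never close it. These coefficients are themselves elements of $\mathscr{T}_0$. Also, as you wrote it, your step for $c^V(v_\mu^*,z_k)$ uses them too, since the $\Rhat$-terms of $\zeta_l$ can have slots of weight $\pm\vp$.

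Your proposed mechanism cannot work. If $\wt v>0>\wt w$ and $\wt v+\wt w=\vp$, then $\wt v=\vp-\wt w>\vp$, which is not a weight of $V$. So no nonzero combination of the twisted vectors covered by Lemma~\ref{lem:key} has weight $\pm\vp$, and applying $E_j$'s to any $\zeta$ necessarily lands in your ``degenerate case''. Concretely, the node of $C$ of weight $\vp-\alpha_i$ is $b_\vp\ot b_{-\alpha_i}$, and $E_i(v_\vp\ot v_{-\alpha_i})=q^{(\vp,\alpha_i)}v_\vp\ot z_{\alpha_i}$. Hence $E_i\zeta$ is a multiple of $v_\vp\ot z_{\alpha_i}-z_{\alpha_i}\ot v_\vp$, whose slots include a zero-weight one.

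Your fallback, pairing $T^*(z_r^*)$ with $v_\vp\ot z_{\alpha_i}$ directly, is circular. The $A_r$-part returns $\sum_{k,l}a^{(r)}_{kl}\,c^V(z_l^*,v_\vp)\,c^V(z_k^*,z_{\alpha_i})$. That is a linear system in the unknowns themselves with coefficients in $\mathcal{O}_q(G)$, which you cannot invert. The $N_r$-part returns $c^V(v_\mu^*,z_{\alpha_i})$, which your plan only reaches afterwards.

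\textbf{The missing idea} is to skip the twist at the boundary. Instead, transport $T(v_\vp\ot z_{\alpha_i})=\kappa_+v_\vp$ to a tensor whose two slots both have nonzero weight. Here $\kappa_+\neq0$ because $v_\vp\ot z_{\alpha_i}$ reduces to the highest weight node $x$ of $C$.
\begin{itemize}
\item Since $\vp+\alpha_i$ is not a weight, $T(v_\vp\ot v_{\alpha_i})=0$.
\item Applying $F_i$ via \eqref{eq:hopf}, (Q2) and (Q4) gives $q^{-2}T(v_{\vp-\alpha_i}\ot v_{\alpha_i})+\kappa_+v_\vp=0$. So $T(v_{\vp-\alpha_i}\ot v_{\alpha_i})=\kappa'v_\vp$ with $\kappa'\in\K^\times$.
\item Now pair with $T^*(z_r^*)=A_r+N_r$ \emph{without} any $\Rhat$. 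The $N_r$-part lies in $\mathscr{T}''$ by \eqref{eq:mult}.
\item The $A_r$-part is $\sum_{k,l}a^{(r)}_{kl}\,c^V(z_l^*,v_{\vp-\alpha_i})\,c^V(z_k^*,v_{\alpha_i})$. Its factors are already covered by your case $\mu\neq\pm\vp$, because $\vp-\alpha_i,\alpha_i\notin\{\pm\vp\}$.
\end{itemize}
The case $-\vp$ is symmetric. Apply $E_i$ to $T(v_{-\alpha_i}\ot v_{-\vp})=0$ and use $T(z_{\alpha_i}\ot v_{-\vp})=\kappa_-v_{-\vp}$, which comes from $x_{\mathrm{low}}$. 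This is the paper's Step~2; without it your argument does not establish $\mathscr{T}_0\subseteq\mathscr{T}''$.
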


\begin{proof}
Step 1 ($W.\vp \setminus \{\pm \vp\}$). Let $\mu\in W\vp$ with $\mu\neq\pm\vp$, and let
$\mu_1,\mu_2$ be as in Theorem~\ref{thm:Tminus}\,(i). By
Corollary~\ref{cor:extract}(i),
$T\big(v_{\mu_1}\ot v_{\mu_2}-\Rhat(v_{\mu_1}\ot v_{\mu_2})\big)
=(1-d)\,q^{-(\mu_1,\mu_2)}c_\mu\,v_\mu$. The scalar $(1-d)q^{-(\mu_1,\mu_2)}c_\mu$ is non-zero, and $\mu_1>0>\mu_2$. Hence
\[
c^V(z_r^*,v_\mu)=\big((1-d)q^{-(\mu_1,\mu_2)}c_\mu\big)^{-1}
c^{V\ot V}\big(T^*(z_r^*),\ v_{\mu_1}\ot v_{\mu_2}-\Rhat(v_{\mu_1}\ot v_{\mu_2})\big)
\in\mathscr{T}''
\]
by Lemma~\ref{lem:key}, for every $r\in I_s$ and every short root $\mu\neq\pm\vp$.

Step 2 ($\vp\;\text{and} -\vp$). We show that $c^V(z_r^*,v_{\pm\vp})\in\mathscr{T}''$ for
all $r\in I_s$. By the construction of $T$, and the description of $x$ and
$x_{\mathrm{low}}$ of Proposition~\ref{prop:twosided}, we obtain the relations
\begin{equation}\label{eq:step2rel}
T(v_\vp\ot z_i)=\kappa_+\,v_\vp,\qquad
T(z_i\ot v_{-\vp})=\kappa_-\,v_{-\vp},\qquad \kappa_\pm\in\K^\times.
\end{equation}
Applying $F_iE_i$ to the first relation and using \emph{(Q1)}--\emph{(Q5)}, a direct computation shows that
\begin{equation}\label{eq:step2key}
T(v_{\vp-\alpha_i}\ot v_{\alpha_i})=\kappa'\,v_\vp,\qquad \kappa'\in\K^\times.
\end{equation}
Here, both tensor slots carry positive weight, $\vp-\alpha_i>0$ and $\alpha_i>0$. Using \eqref{eq:step2key} with $T^*(z_r^*)=A_r+N_r$ gives
\[
\kappa'\,c^V(z_r^*,v_\vp)=c^{V\ot V}\big(N_r,\,v_{\vp-\alpha_i}\ot v_{\alpha_i}\big)
+\sum_{k,l\in I_s}a^{(r)}_{kl}\,c^V(z_l^*,v_{\vp-\alpha_i})\,c^V(z_k^*,v_{\alpha_i}).
\]
The first term is in $\mathscr{T}''$. In the
second term, each factor has a zero-weight functional and an extremal vector with weight
$\vp-\alpha_i,\alpha_i\in W\vp\setminus\{\pm\vp\}$, hence lies in $\mathscr{T}''$ by
Step~1. Thus $c^V(z_r^*,v_\vp)\in\mathscr{T}''$. Applying $E_iF_i$ to the
second relation of \eqref{eq:step2rel} gives
$T(v_{-\alpha_i}\ot v_{-\vp+\alpha_i})=\kappa''\,v_{-\vp}$ with $\kappa''\in\K^\times$
and both slots of negative weight. So, $c^V(z_r^*,v_{-\vp})\in\mathscr{T}''$ by a similar argument. Together with Step~1, this gives
$c^V(z_r^*,v_\mu)\in\mathscr{T}''$ for all $\mu\in\Phis$ and all $r\in I_s$.

Step 3 (zero-weight vectors). Let $\nu_1,\dots,\nu_m$ be as in
Theorem~\ref{thm:Tminus}\,(ii). By Corollary~\ref{cor:extract}(ii) the vectors
$T\big(v_{\nu_k}\ot v_{-\nu_k}-\Rhat(v_{\nu_k}\ot v_{-\nu_k})\big)$,
$k=1,2,\dots,m$, form a basis of $V_0$. Writing
$z_j=\sum_{k=1}^{m}\lambda^{(j)}_k\,
T\big(v_{\nu_k}\ot v_{-\nu_k}-\Rhat(v_{\nu_k}\ot v_{-\nu_k})\big)$ with
$\lambda^{(j)}_k\in\K$, we get 
$c^V(z_r^*,z_j)=\sum_{k=1}^{m}\lambda^{(j)}_k\,
c^{V\ot V}\big(T^*(z_r^*),\ v_{\nu_k}\ot v_{-\nu_k}
-\Rhat(v_{\nu_k}\ot v_{-\nu_k})\big)\in\mathscr{T}''$
by Lemma~\ref{lem:key}, for all $r,j\in I_s$.

Step 4 (transposed family). Let $\mu\in\Phis$ and $j\in I_s$. Similarly following step 3, write $c^V(v_\mu^*,z_j)=\sum_{k=1}^{m}\lambda^{(j)}_k(1-d)\,
c^{V\ot V}\big(T^*(v_\mu^*),\,v_{\nu_k}\ot v_{-\nu_k}\big).$ Therefore, using Step~1, and Step~2 again, we immediately conclude that $c^V(v_\mu^*,z_j)\in\mathscr{T}''$. 

%Decompose $T^*(v_\mu^*)$ into weight components as in \eqref{eq:dagger}. Since $\wt(v_\mu^*)=-\mu\neq0$,
%no component has both slots of weight $0$. Those with both slots of non-zero weight
%give, by \eqref{eq:mult}, products of elements of $\mathscr{T}_{\neq0}$. A component
%with exactly one zero slot has the form $z_l^*\ot\xi$ or %$\xi\ot z_l^*$ with
%$\wt\xi\neq0$, and \eqref{eq:mult} turns it into
%$c^V(\xi,v_{\nu_k})\,c^V(z_l^*,v_{-\nu_k})$ or
%$c^V(z_l^*,v_{\nu_k})\,c^V(\xi,v_{-\nu_k})$; in each case the factors lie in
%$\mathscr{T}''$ either directly or by Steps~1--2, since
%$\pm\nu_k\in\Phis$.

Combining steps 1--4 exhausts $\mathscr{T}_0$, and this settles the proof.
\end{proof}

\begin{corollary}\label{cor:allG}
Let $G$ be a simply connected simple complex algebraic group. Then $\mathcal{O}_q(G)$
is generated as a $\K$-algebra by generalized quantum minors.
\end{corollary}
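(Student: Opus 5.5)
We split into cases according to the type of $\g$ and reduce everything to generation by matrix coefficients of a single tensor generator. For types $A_n$--$D_n$, $E_6$, $E_7$ (and also $G_2$, $E_8$), we invoke \cite{OQY}, where generation by generalized quantum minors is established over $\K$ (indeed over smaller rings, cf.\ Remark~\ref{srings}). Strictly speaking, only $F_4$ needs new input. Nevertheless, we treat $G_2$, $F_4$, $E_8$ uniformly via Theorem~\ref{thm:main}.

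In these three types, $V=V(\vp)$ is a tensor generator of the category of finite-dimensional type-$1$ modules. Concretely, we use that every $V(\lambda)$ is a direct summand of some $V^{\ot n}$. This holds because $P=Q$ and the weights of $V$ include all short roots, which generate $Q$; hence every dominant weight $\lambda$ occurs as the highest weight of a summand of a suitable tensor power. We would justify this by the standard argument with characters: the highest weight $n\vp$ component and PRV-type components, or alternatively Jantzen's result that the weights of $V^{\ot n}$ exhaust $\lambda$ with $\lambda\le n\vp$.

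By semisimplicity, every matrix coefficient of $V(\lambda)$ is then a matrix coefficient of $V^{\ot n}$. By iterating \eqref{eq:mult}, such a coefficient is a linear combination of products of matrix coefficients of $V$, i.e.\ of elements of $\mathscr{T}_{\neq0}\cup\mathscr{T}_0$. Theorem~\ref{thm:main} gives $\mathscr{T}_0\subseteq\mathscr{T}''$, so $\mathcal{O}_q(G)$ equals the algebra generated by $\mathscr{T}_{\neq0}$. Since every $\mu\in\Phis$ lies in $W\vp$, each element of $\mathscr{T}_{\neq0}$ is a generalized quantum minor $\Delta_{w_1\vp,w_2\vp}$, which finishes these types.

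The main obstacle is the tensor-generation claim for $V(\vp)$ in the non-simply-laced types $G_2$ and $F_4$. There, $V$ is not the adjoint representation, so we cannot simply quote the adjoint case. We would settle it by showing that for every dominant $\lambda$ there is some $n$ with $\lambda\le n\vp$ and $n\vp-\lambda\in Q$, which is automatic since $P=Q$ and $\vp$ is dominant and nonzero. We would then argue that $[V^{\ot n}:V(\lambda)]>0$ for large $n$. This can be done inductively: $V(\lambda)\ot V$ contains $V(\lambda+\mu)$ for each short root $\mu$ with $\lambda+\mu$ dominant, by the PRV theorem. It also contains $V(\lambda)$ itself, and $V(\vp)\ot V\supseteq V(0)$ since $-w_0=\mathrm{id}$ makes $V$ self-dual. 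Together these show that the $\lambda$ with a summand in some tensor power form a monoid containing $0$, closed under adding short roots while staying dominant. Since the short roots generate $P=Q$, this monoid is all of $P_+$. The quantum case follows from the classical case by the usual character comparison.
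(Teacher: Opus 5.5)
Your reduction in types $G_2$, $F_4$, $E_8$ is exactly the paper's: $V=V(\vp)$ is a tensor generator, \eqref{eq:mult} expresses everything through $\mathscr{T}_{\neq0}\cup\mathscr{T}_0$, and Theorem~\ref{thm:main} finishes. For the remaining types you cite the earlier result, whereas the paper gives the one-line minuscule argument; either is fine. The difference is that the paper simply asserts tensor generation. You flag it as the main obstacle and try to prove it, and that argument has a gap.

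The closure property you use is correct: by PRV, or directly from \eqref{eq:tensorrule}, $b_\lambda\ot b_\mu$ is a highest weight node exactly when $\lambda+\mu$ is dominant, for $\mu\in\Phis$. The set $S$ of $\lambda$ occurring in some $V^{\ot n}$ is also a monoid containing $0$. But \emph{the short roots generate $Q$, hence $S=P_+$} does not follow. Your closure property only lets you move along short-root steps that stay inside the dominant cone; from $0$, for example, the only admissible step is to $\vp$. Generation of the lattice says nothing about whether every dominant weight is joined to $0$ by such a walk.

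Two side claims are also off, though neither is needed:
\begin{itemize}
\item $V(\lambda)\subseteq V(\lambda)\ot V$ holds iff $\langle\lambda,\alpha_k^\vee\rangle\ge1$ for some short simple $\alpha_k$, by the same crystal criterion applied to the nodes $b_{k,0}$. It therefore fails for $\lambda=0$, and for $\lambda=\vp_1$ in $F_4$, where $V(\vp_1)\ot V(\vp_4)\cong V(\vp_1+\vp_4)\oplus V(\vp_3)\oplus V(\vp_4)$.
\item The alternative via weights of $V^{\ot n}$ does not work: $\lambda$ occurring as a weight does not make $V(\lambda)$ a summand.
\end{itemize}

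The repair is short. Since $S$ is a monoid (take Cartan components), it suffices to show every fundamental weight lies in $S$.
\begin{itemize}
\item In $G_2$: $\vp_2=\vp_1+(\alpha_1+\alpha_2)$.
\item In $F_4$: $\vp_3=\vp_4+s_4\vp_4$, $\vp_1=\vp_4+(\alpha_1+\alpha_2+\alpha_3)$, and $\vp_2=\vp_3+(\alpha_1+2\alpha_2+2\alpha_3+\alpha_4)$. Each step adds a short root and joins dominant weights, so your closure property applies.
\item For $E_8$, or uniformly for all three types, use faithfulness. Since $P=Q$, $Z(G)$ is trivial, so the nontrivial representation $V(\vp)$ of the simple group $G$ is faithful. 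Every representation is then a subquotient, hence a summand, of some $V^{\ot a}\ot(V^*)^{\ot b}$. Because $V\cong V^*$, tensor powers of $V$ suffice. Pass to $\Uq$ using that tensor product multiplicities of type-$1$ modules agree with the classical ones.
\end{itemize}
With this in place, your argument coincides with the paper's.
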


\begin{proof}
If $\g=Lie(G)$ is of type $A_n$, $B_n$, $C_n$, $D_n$,
$E_6$ or $E_7$, then $\g$ admits a family of non-trivial
minuscule modules that is also a tensor generating family, so $\mathcal{O}_q(G)$ is generated
by their matrix coefficients. Every weight of a minuscule module is extremal, so each of these matrix coefficients is a generalized quantum minor. Hence, the assertion follows.

In the remaining types $G_2$, $F_4$, and $E_8$,
the quasi-minuscule module $V=V(\vp)$ is a tensor generator. So $\mathcal{O}_q(G)$ is
generated by $\mathscr{T}_{\neq0}\sqcup\mathscr{T}_0$. By Theorem~\ref{thm:main} we have
$\mathscr{T}_0\subseteq\mathscr{T}''$. Hence, $\mathcal{O}_q(G)=\mathscr{T}''$ is
generated by $\mathscr{T}_{\neq0}$, which is the set of generalized
quantum minors of $V$.
\end{proof}

\begin{remark}
\label{srings}
Corollary~\ref{cor:allG} is stated over $\K=\mathbb Q(q)$. In fact \cite{OQY}
establish a sharp refinement of it over much smaller ground rings. For types $A_n$--$D_n$, $E_6$, and $E_7$, the corresponding statement holds over the
Laurent ring $\mathcal A=\mathbb Z[q^{\pm1/2}]$, and for $G_2$ over
$\mathcal A\big[(q^2+1)^{-1}\big]$. However, the $E_8$ case is still obtained only
over $\mathbb Q(q^{1/2})$, and the $F_4$ case was left open.
\end{remark}

\begin{corollary}\label{cor:cluster}
Let $G$ be of type $F_4$. Then, $\mathcal{O}_q(G)$ admits a quantized cluster
algebra structure over $\mathbb Q(q^{1/2})$.
\end{corollary}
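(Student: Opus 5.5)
The statement is Corollary~\ref{cor:cluster}. The plan is to combine Corollary~\ref{cor:allG} (now available for $F_4$) with the existing machinery, in the same way \cite{OQY} concluded the cluster structure in the other types. Fix the Berenstein--Fomin--Zelevinsky quantum seed attached to the double Bruhat cell $G^{w_0,w_0}$, over $\mathbb Q(q^{1/2})$. Let $\mathscr A_q$ denote the associated quantum cluster algebra (frozen variables not inverted) and $\mathscr U_q$ the quantum upper cluster algebra. The aim is to establish the chain
\[
\mathcal O_q(G)\subseteq \mathscr A_q\subseteq \mathscr U_q=\mathcal O_q(G),
\]
after extending scalars to $\mathbb Q(q^{1/2})$.

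\textbf{Step 1: the equality $\mathscr U_q=\mathcal O_q(G)$.} I would quote this from Qin and Yakimov \cite{QY}. Their argument does not depend on the type of $G$, and in particular it covers $F_4$. Before quoting it, I would check that their hypotheses, namely the choice of seed and the treatment of frozen variables, match the conventions fixed in the introduction.

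\textbf{Step 2: the inclusion $\mathscr A_q\subseteq\mathscr U_q$.} This is the quantum Laurent phenomenon of \cite{BZ}, applied to the same seed. It requires the compatibility of the exchange matrix with the quasi-commutation matrix of the initial cluster. That compatibility holds for BFZ seeds in all types.

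\textbf{Step 3: the inclusion $\mathcal O_q(G)\subseteq\mathscr A_q$.} This is where the new input enters. By Corollary~\ref{cor:allG}, $\mathcal O_q(G)\otimes\mathbb Q(q^{1/2})$ is generated by the generalized quantum minors $\Delta_{w_1\lambda,w_2\lambda}$. It therefore suffices to show that every such minor, suitably renormalized by a power of $q^{1/2}$, is a quantum cluster variable or a frozen variable of $\mathscr A_q$. Two facts give this:
\begin{itemize}
\item[(a)] The initial cluster of the BFZ seed consists of minors of the form $\Delta_{u\vp_j,v\vp_j}$ attached to a reduced word for $(w_0,w_0)$.
\item[(b)] Changing the reduced word is realized by sequences of quantum mutations.
\end{itemize}
Every pair $(w_1\vp_j,w_2\vp_j)$ occurs in the initial cluster for a suitable choice of reduced word. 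Hence all generalized quantum minors of fundamental weights are cluster variables. The minors for a general $\lambda$ are not needed, because the generating set produced in the proof of Corollary~\ref{cor:allG} consists of minors of $V(\vp)$, and $\vp=\vp_4$ is fundamental for $F_4$. I expect this to be exactly how \cite{OQY} argue in the other types, so here I would cite their reduction rather than redo it.

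\textbf{Main obstacle.} The delicate point is the ground field. Corollary~\ref{cor:allG} is stated over $\mathbb Q(q)$, while the cluster structure lives over $\mathbb Q(q^{1/2})$, and the cluster variables are $q^{1/2}$-rescaled minors. I would handle this by base change. Since $\mathbb Q(q^{1/2})$ is flat over $\mathbb Q(q)$, generation by minors survives tensoring. The rescaling by powers of $q^{1/2}$ is invertible over $\mathbb Q(q^{1/2})$, so it does not affect generation. Combining Steps~1--3 then gives $\mathcal O_q(G)=\mathscr A_q=\mathscr U_q$ over $\mathbb Q(q^{1/2})$, which is the asserted quantized cluster algebra structure.
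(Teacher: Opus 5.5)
Your proposal is correct and takes the same route as the paper. The paper's proof simply invokes the argument preceding \cite[Theorem~5.4]{OQY}, namely generation by generalized quantum minors (Corollary~\ref{cor:allG}), combined with the quantum Laurent phenomenon of \cite{BZ} and the equality $\mathcal{O}_q(G)=\mathscr{U}$ of \cite{QY}; your Step~3 spells out in more detail what that citation covers, namely realizing each fundamental minor in some BFZ initial seed and using mutation-equivalence of these seeds.
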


\begin{proof}
Corollary~\ref{cor:allG} gives in type $F_4$ the generation of $\mathcal{O}_q(G)$ via generalized quantum minors. It immediately implies that $\mathcal{O}_q(G)$ admits a quantized cluster algebra structure over $\mathbb{Q}(q^{1/2})$ by the same argument given before
\cite[Theorem~5.4]{OQY}.
\end{proof}

\begin{remark}\label{obs}
It is worthwhile to mention that Corollary \ref{cor:allG} for $G=F_4$ cannot be specialized at $q=1$. To be precise, the quasi-minuscule module $V(\vp_4)$ appears in the symmetric subspace of the tensor product module $V(\vp_4)\otimes V(\vp_4)$, we have $d=1$ ($\hat{R}$ is replaced by the flip)  for $\mathcal{O}(F_4)$ (see Remark \ref{rem:d}). Therefore, the factor $\frac{1}{1-d}$ repeatedly used in the proof of the Theorem \ref{thm:main} blows up as $q\to 1$. In \cite[Remark 3.10]{Oya}, Oya explained why his techniques also could not touch the $F_4$ case in the classical setting. At this point, the most promising approach to settle this problem seems to reprove the Corollary \ref{cor:allG} for $F_4$ over $\mathcal{A}$, and then specialize at $q=1$. But we do not have a proof of this yet. 
\end{remark}

\medskip 

\noindent \textbf{Acknowledgments}: The problem in type $F_4$ was proposed to the author by Hironori Oya. The author is deeply grateful to him for many valuable discussions, thoughtful comments, and for pointing out exactly what is unresolved and what to look for. The author would also like to thank his thesis advisor, Prof. Arup K. Pal, for discussions regarding the braiding and Proposition \ref{prop:d}. The author also acknowledges financial support from the Indian Statistical Institute through a PhD fellowship.

\end{document}